\documentclass[11pt,leqno]{amsart}
\usepackage{amsmath,amssymb,amsthm}
\newtheorem{theorem}{Theorem}[section]
\newtheorem{lemma}[theorem]{Lemma}

\newtheorem{proposition}[theorem]{Proposition}
\newtheorem{corollary}[theorem]{Corollary}
\theoremstyle{definition}

\theoremstyle{remark}
\newtheorem{remark}[theorem]{Remark}
\numberwithin{equation}{section}
%    Absolute value notation

%    Blank box placeholder for figures (to avoid requiring any
%    particular graphics capabilities for printing this document).

\def\fnote#1{\footnote}

\def\natu{{\mathbb N}}

\def\real{{\mathbb R}}

\def\ignora#1{}
\def\lbl#1{\label{#1}}        %Versi\'{o}n definitiva
\def\n3#1{\left\vert  \! \left\vert \! \left\vert \, #1 \, \right\vert \!
  \right\vert \! \right\vert }

%\hoffset=-0.35cm    %traslaci\'{o}n correcta con winedit y sin
                    %especificar a4paper en las opciones de documentclass
%\voffset=-0.5cm \setlength{\textwidth}{126mm}
%\setlength{\textheight}{193mm}

\begin{document}

\title{ Octahedral norms and convex combination of slices in Banach spaces  }
%\author{}
%\address{Universidad de Granada, Facultad de Ciencias.
%Departamento de Matem\'{a}tica Aplicada, 18071-Granada (Spain)}

\author{Julio Becerra Guerrero, Gin{\'e}s L{\'o}pez-P{\'e}rez and Abraham Rueda Zoca}
\address{Universidad de Granada, Facultad de Ciencias.
Departamento de An\'{a}lisis Matem\'{a}tico, 18071-Granada
(Spain)} \email{juliobg@ugr.es, glopezp@ugr.es,
arz0001@correo.ugr.es}

\thanks{The first author was partially supported by MEC (Spain) Grant MTM2011-23843 and Junta de Andaluc\'{\i}a grants
FQM-0199, FQM-1215. The second author was partially supported by
MEC (Spain) Grant MTM2012-31755 and Junta de Andaluc\'{\i}a Grant
FQM-185.} \subjclass{46B20, 46B22. Key words:
  slices, relatively weakly open sets, Radon-Nikodym property, renorming, octahedral norms.}
\maketitle \markboth{J. Becerra, G. L\'{o}pez and A. Rueda   }{
  Octahedral norms }

\begin{abstract}
 We study the relation between octahedral norms, Daugavet property
 and the size of convex combinations of slices in Banach spaces. We
 prove that the norm of an arbitrary Banach space is octahedral
 if, and only if, every convex combination of $w^*$-slices in the
 dual unit ball has diameter  $2$, which answer an open question. As a consequence we get that the Banach
spaces with the Daugavet property and its dual spaces have
octahedral norms. Also, we show that for every separable Banach
space containing $\ell_1$ and for every $\varepsilon >0$ there
 is an equivalent norm so that every convex combination of $w^*$-slices in the
 dual unit ball has diameter at least $2-\varepsilon$.
\end{abstract}

\section{Introduction}
\par
\bigskip

We recall that a norm $\Vert\cdot\Vert$ of a Banach space $X$ is
called octahedral if for every $\varepsilon>0$ and for every
finite-dimensional subspace $Y$ of $X$   there is $x\in S_X$ such
that
$$ \Vert y+\lambda x\Vert\geq (1-\varepsilon)(\Vert
y\Vert+\vert\lambda\vert)$$ for every $y\in Y$ and
$\lambda\in\real$.

The octahedral norms were introduced by G. Godefroy in \cite{G},
where it is    proved that every Banach space containing $\ell_1$
can be equivalently renormed so that the new norm is octahedral.
Also some norms weaker than octahedral norms were used to
characterize Banach spaces containing $\ell_1$ \cite{D}. On the
other hand, R. Deville proved that  every convex combination of
$w^*$-slices of the unit ball in the dual of a Banach space $X$
has diameter two, whenever $X$ has an octahedral norm, and it is
asked  about the veracity of  the converse statement (see
\cite[Proposition 3, Remarks (c), pag. 119]{D}).

The aim of this note is to show in Theorem \ref{octahedral} that
the norm of a  Banach space $X$ is octahedral if, and only if,
every convex combination of $w^*$-slices in the unit ball of $X^*$
has diameter 2. Some consequences can be obtained from this fact.
For example, we get in Corollary \ref{octahedral-DG} that a Banach
space with the Daugavet property and its dual space have
octahedral norms. The fact that every Banach space with the
Daugavet property has an octahedral norm has been recently proved
in the separable case in \cite{KShW}. In the world of
$JB^*$-triple we get in Corollary \ref{octahedral-triples}, that a
real $JB^*$-triple $X$ has the Daugavet property if, and only if,
the norm of $X$ is octahedral, and in Corollary
\ref{octahedral-triples-dual}, it is shown that the dual of every
real $JB^*$-triple, has octahedral norm.

Finally, we prove in Proposition \ref{convex-w}, that for every
Banach  space $X$ containing $\ell_1$ and for every
$\varepsilon>0$ there is an equivalent  norm in $X$ such that
every convex combination of slices in the new unit ball of $X^*$
has diameter $2-\varepsilon$. This result has relation with the
following problem posed in \cite{G}: has every Banach space
containing $\ell_1$ an equivalent norm so that the  corresponding
bidual norm is octahedral?

We pass now to introduce some notation. $B_X$, respectively $S_X$,
stands for the unit ball, respectively unit sphere, of the Banach
space $X$. All Banach spaces considered will be real. By $w$ will
denotes the weak topology in $X$ and by $w^*$ the weak-star
topology in $X^*$. We recall that a slice in $X$ is a subset
defined by $S(B_X, f,\alpha)=\{x\in B_X: f(x)>1-\alpha\}$, where
$f\in S_{X^*}$ and $0<\alpha<1$. Similarly, a $w^*$-slice in $X^*$
is a subset defined by $S(B_{X^*},x,\alpha)=\{f\in
B_{X^*}:f(x)>1-\alpha\}$, where $x\in S_{X}$ and $0<\alpha<1$.

 \section{Main results}
\par
\bigskip

In \cite[Proposition 3 and Theorem 1]{D} it is proved that if a
Banach space $X$ has octahedral norm, then every convex
combinations of $w^*$-slices in $X^*$ has diameter 2, leaving open
the converse statement \cite[Remarks (c), pag. 119]{D}. Our first
goal is to prove the validity of this converse statement. For sake
of completeness we show the complete equivalence.

\begin{theorem}\lbl{octahedral}
Let $X$ be a Banach space. Then the following are equivalent:
\begin{enumerate}
\item The norm of $X$ is an octahedral norm. \item Every
  convex combination of $w^*$-slices in $B_{X^*}$ has diameter $2$.
\end{enumerate}
\end{theorem}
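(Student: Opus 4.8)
The plan is to prove both implications, treating $(2)\Rightarrow(1)$ as the genuinely new content and recovering Deville's direction $(1)\Rightarrow(2)$ by a short Hahn--Banach argument. For $(1)\Rightarrow(2)$, I would fix a convex combination $C=\sum_{i=1}^n\lambda_i S(B_{X^*},x_i,\alpha_i)$, set $Y=\mathrm{span}\{x_1,\dots,x_n\}$, choose $\varepsilon<\min_i\alpha_i$, and use octahedrality to get $x\in S_X$ with $\|y+\mu x\|\geq(1-\varepsilon)(\|y\|+|\mu|)$ for all $y\in Y,\ \mu\in\real$; note $x\notin Y$, so $Z:=Y\oplus\real x$ is a true direct sum on which the norm dominates $(1-\varepsilon)$ times $\|y+\mu x\|_1:=\|y\|+|\mu|$. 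Picking $f_i\in S_{X^*}$ with $f_i(x_i)=1$, the functional $\psi_i^{s}(y+\mu x)=f_i(y)+s\mu$ ($s=\pm1$) has $\|\psi_i^s\|_{Z^*}\leq(1-\varepsilon)^{-1}$; a Hahn--Banach extension to $X^*$, rescaled by $(1-\varepsilon)$, yields $h_i^s\in B_{X^*}$ with $h_i^s(x_i)=1-\varepsilon>1-\alpha_i$ (hence $h_i^s\in S(B_{X^*},x_i,\alpha_i)$) and $h_i^s(x)=s(1-\varepsilon)$. Then $p=\sum_i\lambda_i h_i^+$ and $q=\sum_i\lambda_i h_i^-$ lie in $C$ and $\|p-q\|\geq(p-q)(x)=2(1-\varepsilon)$, so $\mathrm{diam}(C)=2$.

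For $(2)\Rightarrow(1)$ the aim, given a finite-dimensional $Y\subseteq X$ and $\varepsilon>0$, is to produce a single direction $x\in S_X$ that nearly doubles the norm on a fine net of $S_Y$, and then to propagate this to all of $Y$. I take a symmetric $\gamma$-net $x_1,\dots,x_n$ of $S_Y$, pick slices $S_i=S(B_{X^*},x_i,\eta)$ with $\eta$ small, and apply $(2)$ to $C=\frac1n\sum_i S_i$, whose diameter is $2$. This gives $f=\frac1n\sum_i f_i'$ and $g=\frac1n\sum_i g_i'$ in $C$ together with some $x\in S_X$ satisfying $f(x)>1-\delta$ and $g(x)<-1+\delta$. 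The key step is an averaging lemma: since each $f_i'(x)\leq 1$, an average exceeding $1-\delta$ forces $f_i'(x)>1-n\delta$ for every $i$, and symmetrically $g_i'(x)<-1+n\delta$. Combining with $f_i'(x_i),g_i'(x_i)>1-\eta$ gives $\|x_i+x\|\geq f_i'(x_i+x)>2-\eta-n\delta$ and $\|x_i-x\|\geq g_i'(x_i-x)>2-\eta-n\delta$ simultaneously, for the one direction $x$.

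It remains to upgrade this doubling on the net to the full octahedral inequality $\|y+\lambda x\|\geq(1-\varepsilon)(\|y\|+|\lambda|)$ for all $y\in S_Y$ and $\lambda\in\real$, and this uniformity over all $\lambda$ is where I expect the main obstacle to lie. I would split by the size of $\lambda$: for $|\lambda|$ small the trivial estimate $\|y+\lambda x\|\geq 1-|\lambda|$ already beats $(1-\varepsilon)(1+|\lambda|)$, whereas for $|\lambda|$ bounded below I approximate $y$ by a net point $x_j$ and feed $\|x_j+\mathrm{sign}(\lambda)x\|>2-\varepsilon'$ into a norming functional $h$ of $x_j+\mathrm{sign}(\lambda)x$ to obtain $\|x_j+\lambda x\|\geq h(x_j+\lambda x)>(1-\varepsilon')(1+|\lambda|)$, which absorbs the additive $\gamma$-error. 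Choosing $\gamma,\eta,\delta,\varepsilon'$ small enough in terms of $\varepsilon$ (so that the two ranges of $\lambda$ overlap) closes the argument and establishes octahedrality.
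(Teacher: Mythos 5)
Your proposal is correct and follows essentially the same route as the paper's proof: for (1)$\Rightarrow$(2) you use Hahn--Banach to manufacture slice elements taking values near $\pm1$ at the octahedral direction $x$ (the paper instead takes functionals norming $x_i\pm x$, with the same effect), and for (2)$\Rightarrow$(1) you use exactly the paper's equal-weight combination $\frac{1}{n}\sum_{i=1}^n S(B_{X^*},x_i,\eta)$, the same averaging observation that an average of terms bounded by $1$ exceeding $1-\delta$ forces each term above $1-n\delta$, and the same net-to-$Y$ transfer. Your only deviation, the case split on the size of $|\lambda|$, is sound but superfluous: since $1+|\lambda|\geq 1$, the additive net error $\gamma$ is absorbed uniformly once $\gamma\leq\varepsilon-\varepsilon'$, which is how the paper proceeds (using the slice functionals $f_i,g_i$ themselves, split only by the sign of $\lambda$, to get $\Vert t y_i+\alpha x\Vert\geq(1-\rho)(t\Vert y_i\Vert+\vert\alpha\vert)$ for all $\alpha$ at once).
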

\begin{proof} i)$\Rightarrow$ii) Pick $N\in\mathbb N, x_1,\ldots, x_N\in S_X, \rho_1,\ldots,
\rho_N\in (0,1)$ and\break
 $\alpha_1 ,\ldots ,\alpha_N >0$ such
that $\sum_{i=1^N}\alpha_i=1$. Let $\rho:=\min\limits_{1\leq i\leq
N} \rho_i$. Then

$$\sum_{i=1}^N\alpha_i S( B_{X^*},x_i,\rho)\subseteq \sum_{i=1}^N\alpha_i S( B_{X^*},x_i,\rho_i)$$

So it is enough to prove that $diam\left (\sum_{i=1}^N\alpha_i S(
B_{X^*},x_i,\rho)\right )=2$.

Put $Y=span(\{x_1,\ldots, x_N\})$ and fix $n\in\mathbb N$. As
$\Vert\cdot\Vert$ is octahedral there exists $x_n\in S_X$
satisfying

$$\Vert y+\alpha x_n\Vert\geq \left (1-\frac{1}{n}\right ) (\Vert y \Vert+\vert \alpha\vert)$$

So

\begin{equation}\label{primeradesig}\Vert x_i\pm x_n\Vert\geq 2 \left (1-\frac{1}{n} \right )\ \ \ i\in \{1,\ldots, N\}\end{equation}

For $i\in \{1,\ldots, N\}$, by (\ref{primeradesig}) and
Hahn-Banach's theorem there exists $f_{in},g_{in}\in S_{X^*}$ such
that

$$
\begin{array}{c}
f_{in}(x_i+x_n)=\Vert
x_i+x_n\Vert\mathop{\geq}\limits^{(\ref{primeradesig})} 2 \left
(1-\frac{1}{n}\right )\\ \\ g_{in}(x_i-x_n)=\Vert
x_i-x_n\Vert\mathop{\geq}\limits^{(\ref{primeradesig})} 2 \left
(1-\frac{1}{n}\right )\ .
\end{array}
$$

As a consequence, for  $i\in \{1,\ldots, n\}$, we have that

$$\begin{array}{cc}
f_{in}(x_i)>1-\frac{2}{n} & f_{in}(x_n)>1-\frac{2}{n}\\ \\
g_{in}(x_i)>1-\frac{2}{n} & g_{in}(x_n)<-\left
(1-\frac{2}{n}\right )
\end{array}$$

Pick $T\in \mathbb N$ such that $1-\frac{2}{T}>1-\rho$. Then, for
 $k\geq T$, it follow that $f_{ik},g_{ik}\in S(B_{X^*},x_i,\rho)$ and so

$$\sum_{i=1}^n \alpha_i f_{ik},\sum_{i=1}^n \alpha_i g_{ik}\in \sum_{i=1}^n \alpha_i S( B_{X^*},x_i,\rho).$$

Moreover

$$\left \Vert \sum_{i=1}^n \alpha_i f_{ik}-\sum_{i=1}^n \alpha_i g_{ik} \right \Vert\geq \left \vert \sum_{i=1}^n \alpha_i
 f_{ik}(x_k)-\sum_{i=1}^n \alpha_i g_{ik}(x_k) \right \vert\geq$$

$$\geq \sum_{i=1}^n \alpha_i f_{ik}(x_k)-\sum_{i=1}^n \alpha_i g_{ik}(x_k) =\sum_{i=1}^n \alpha_i
 f_{ik}(x_k)-\sum_{i=1}^n \alpha_i g_{ik}(x_k)>$$

$$>\sum_{i=1}^n \alpha_i \left (1-\frac{2}{k} \right )+\sum_{i=1}^n \alpha_i \left (1-\frac{2}{k} \right )=2 \left (1-\frac{2}{k} \right )
 \sum_{i=1}^n \alpha_i=2-\frac{4}{k}.$$It follows that $diam\left (\sum_{i=1}^n S(
 B_{X^*},x_i,\rho)
\right)=2$.

\bigskip

ii)$\Rightarrow$i) For the converse, let $Y\subseteq X$ be a
finite-dimensional subspace, $\varepsilon\in\mathbb R^+$ and
$\delta\in\mathbb R^+$ such that $2 \delta<\varepsilon$. By
compactness of $ S_Y$ pick a $\delta-$net $\{y_1,\ldots, y_n\}$ in
$S_Y$. Let us consider the convex combination of $w^*-$slices
$$\sum_{i=1}^n \frac{1}{n}S( B_{X^*},y_i,\rho)\ \ \ {\rm whenever}\ \  0<\rho<\delta$$
and pick $0<\widehat{\rho}<\frac{\rho}{n}$.

By assumption, $diam \left (\sum_{i=1}^n \frac{1}{n} S(
B_{X^*},y_i,\rho) \right )=2$,  hence there exists
$$\sum_{i=1}^n \frac{1}{n} f_i, \sum_{i=1}^n \frac{1}{n} g_i\in
\sum_{i=1}^n \frac{1}{n} S( B_{X^*},y_i,\rho)$$ such that
$$\left \Vert \sum_{i=1}^n \frac{1}{n} f_i- \sum_{i=1}^n \frac{1}{n} g_i \right \Vert>2-\widehat{\rho} \ .$$
We put $x\in  S_X$ such that $\sum_{i=1}^n \frac{1}{n}
(f_i(x)-g_i(x))>2-\widehat{\rho}$. It follows that,
$$f_i(x)-g_i(x)>2-\rho  \ \ \ \forall i\in \{1,\ldots, n\} .$$
This implies that,
$$f_i(x)>1-\rho \ \ \mbox{and} \ \  g_i(x)<-(1-\rho),\ \ \forall i\in \{1,\ldots,
n\}.$$Furthermore, as $f_i,g_i\in S( B_{X^*},y_i,\rho)$ we have
$$
f_i(y_i)>1-\rho \ \  \mbox{and} \ \  g_i(y_i)>1-\rho ,\ \  \forall
i\in \{1,\ldots, n\}.$$ So, taking arbitrary $t\in\mathbb R_0^+$
and for $\alpha\geq 0$ one has $$\Vert t y_i+\alpha x\Vert \geq
f_i(t y_i+\alpha x)\geq t (1-\rho)+\alpha (1-\rho)=(1-\rho) (t
\Vert y_i\Vert+\vert \alpha\vert).$$ Now,  for $\alpha\leq 0$ one
has $$\Vert t y_i+\alpha x\Vert \geq g_i(t y_i+\alpha x)=t
g_i(y_i)+(-\alpha) (-g_i(x))\geq t (1-\rho)+(-\alpha) (1-\rho)=$$
$$=(1-\rho)(t \Vert y_i\Vert+\vert \alpha\vert).$$ In any case, we have
\begin{equation}\label{desigualdad particular}
\Vert t y_i+\alpha x\Vert\geq (1-\rho) (t \Vert y_i\Vert+\vert
\alpha\vert) .
\end{equation}

Pick an arbitrary  $y\in Y\setminus \{0\}$. There exists $i\in
\{1,\ldots, n\}$, such that $\left \Vert \frac{y}{\Vert
y\Vert}-y_i \right \Vert<\delta$, a hence $ \Vert y-\Vert y\Vert
y_i\Vert<\delta \Vert y\Vert$. By (\ref{desigualdad particular})
it follows $$\Vert \Vert y\Vert y_i+\alpha x\Vert\geq (1-\rho)
(\Vert y\Vert+\vert \alpha\vert).$$ Thus $$\Vert y+\alpha
x\Vert=\Vert y- \Vert y\Vert y_i+\Vert y\Vert y_i+\alpha x\Vert
\geq \Vert \Vert y\Vert y_i+\alpha x\Vert - \Vert y- \Vert y\Vert
y_i\Vert\geq$$
$$\geq (1-\rho)(\Vert y\Vert +\vert \alpha\vert )-\delta \Vert y \Vert \mathop{\geq}\limits_{\delta>\rho} (1- \delta)
 \Vert y\Vert +(1-\delta) \vert \alpha\vert-\delta \Vert y\Vert \geq$$  $$(1-2 \delta) (\Vert y\Vert+\vert \alpha\vert)
\mathop{\geq}\limits_{2 \delta<\varepsilon} (1-\varepsilon) (\Vert
y\Vert+\vert \alpha\vert)$$ So we have proved that $\forall y\in
Y, y\neq 0, \forall \alpha\in\mathbb R$ we have $$\Vert y+\alpha
x\Vert\geq (1-\varepsilon) (\Vert y\Vert+\vert \alpha\vert)$$ and
for $y=0$ is also true. We conclude that the norm $\Vert\cdot
\Vert$ is octahedral.\end{proof}

Let us observe that a Banach space $X$ satisfies that every convex
combinations of slices of $B_X$ has diameter 2 if, and only if,
every convex combination of $w^*$-slices of $B_{X^{**}}$ has
diameter 2, since $B_X$ is $w^*$-dense in $B_{X^{**}}$ and the
norm of $X^{**}$ is $w^*$-lower semicontinuous. Then the following
is a immediate consequence of the above theorem.

\begin{corollary} \lbl{octahedral-dual} Let $X$ be a Banach space. Then, every convex
combinations of slices in $B_X$ has diameter 2 if, and only if,
the norm of $X^*$ is an octahedral norm.\end{corollary}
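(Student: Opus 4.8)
The plan is to obtain the statement by chaining two equivalences, so that no new hard work is required. First I would apply Theorem \ref{octahedral} \emph{not} to $X$ but to its dual space $X^*$. The bidual of $X^*$ is $X^{**}$, and the $w^*$-slices of $(X^*)^*=X^{**}$ are precisely the sets $S(B_{X^{**}},f,\alpha)$ with $f\in S_{X^*}$. Hence the theorem, read for $X^*$, says: the norm of $X^*$ is octahedral if, and only if, every convex combination of $w^*$-slices in $B_{X^{**}}$ has diameter $2$. It then remains only to link convex combinations of slices of $B_X$ with convex combinations of $w^*$-slices of $B_{X^{**}}$, which is exactly the observation recorded just before the statement.

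The substantive point, which I would make precise (or simply cite from that preceding remark), is that passing from $B_X$ to $B_{X^{**}}$ leaves invariant the diameters of convex combinations of slices. Fix data $f_1,\dots,f_n\in S_{X^*}$, $\alpha_1,\dots,\alpha_n\in(0,1)$ and weights $\lambda_i\ge 0$ with $\sum_i\lambda_i=1$, and set
$$C=\sum_{i=1}^n\lambda_i\,S(B_X,f_i,\alpha_i),\qquad D=\sum_{i=1}^n\lambda_i\,S(B_{X^{**}},f_i,\alpha_i).$$
Identifying $X$ isometrically with its canonical image in $X^{**}$, one has $S(B_X,f_i,\alpha_i)=S(B_{X^{**}},f_i,\alpha_i)\cap X$, and each $w^*$-slice is relatively $w^*$-open in $B_{X^{**}}$; since $B_X$ is $w^*$-dense in $B_{X^{**}}$ (Goldstine), each slice of $B_X$ is $w^*$-dense in the corresponding $w^*$-slice of $B_{X^{**}}$. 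As the map $(s_1,\dots,s_n)\mapsto\sum_i\lambda_i s_i$ is $w^*$-continuous, $C$ is $w^*$-dense in $D$; in particular $C\subseteq D$ gives $diam(C)\le diam(D)$.

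For the reverse inequality I would use that the norm of $X^{**}$ is $w^*$-lower semicontinuous. Given $u,v\in D$, pick nets in $C$ converging $w^*$ to $u$ and to $v$; their differences then converge $w^*$ to $u-v$, and lower semicontinuity of the norm forces $\Vert u-v\Vert\le diam(C)$, whence $diam(D)\le diam(C)$. Thus $diam(C)=diam(D)$ for every choice of data, which yields the equivalence ``every convex combination of slices of $B_X$ has diameter $2$ $\iff$ every convex combination of $w^*$-slices of $B_{X^{**}}$ has diameter $2$.'' Combining this with the reading of Theorem \ref{octahedral} for $X^*$ gives the corollary. I do not expect any genuine obstacle: the whole argument is bookkeeping with Goldstine density and $w^*$-lower semicontinuity of the dual norm, the real content having already been carried by Theorem \ref{octahedral}.
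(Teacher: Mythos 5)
Your proposal is correct and is essentially the paper's own proof: the paper likewise obtains the corollary by applying Theorem \ref{octahedral} to $X^*$ and invoking the observation recorded just before the statement, namely that $w^*$-density of $B_X$ in $B_{X^{**}}$ (Goldstine) together with $w^*$-lower semicontinuity of the norm of $X^{**}$ makes the diameters of convex combinations of slices of $B_X$ agree with those of the corresponding convex combinations of $w^*$-slices of $B_{X^{**}}$. The only difference is that you spell out the density and lower-semicontinuity bookkeeping which the paper leaves as a one-line remark.
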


In order to get some consequences of the above results, we recall
that a Banach space $X$ has the  Daugavet property with respect
$Y$, for some subspace $Y$ of $X^*$, if $\Vert T+I\Vert=1+\Vert
T\Vert$ for every rank one operator\break $T:X\rightarrow X$ given
by $T=x\otimes y^*$, where $x\in X$ and $y^*\in Y$. The Banach
space $X$ is said to have the almost Daugavet property if $X$
satisfies the Daugavet property with respect to some norming
subspace $Y$ of $X^*$. Finally, $X$ is said to have the Daugavet
property if $X$ satisfies the Daugavet property with respect to
$X^*$ (see \cite{KShW}).

For a Banach space $X$ satisfying the Daugavet property, it is
essentially known \cite{Sh}, that every convex combinations of
$w^*$-slices of $B_{X^*}$ has diameter 2. The next lemma shows
that the same holds for Banach spaces with the almost Daugavet
property.

\begin{lemma} \lbl{lamost-DG} Let $X$ be a Banach space satisfying the almost
Daugavet pro\-per\-ty. Then every convex combination of
$w^*$-slices in $B_{X^*}$ has diameter 2.\end{lemma}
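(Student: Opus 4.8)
The plan is to reduce the statement to the same functional-analytic computation that proves the implication i)$\Rightarrow$ii) in Theorem \ref{octahedral}, the only extra ingredient being a \emph{coherent} vector produced from the almost Daugavet property in place of the one coming from octahedrality.

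First I would record the geometric description of the almost Daugavet property. If $X$ satisfies the Daugavet property with respect to a norming subspace $Y$ of $X^*$, then $\Vert I+x\otimes y^*\Vert=1+\Vert x\otimes y^*\Vert=2$ for every $x\in S_X$ and every $y^*\in S_Y$; expanding $(I+x\otimes y^*)(w)=w+y^*(w)x$ and letting the norm of this image tend to $2$ gives the slice form used in \cite{KShW}: for every $x\in S_X$, every $y^*\in S_Y$ and every $\varepsilon>0$ there is $w\in B_X$ with $y^*(w)>1-\varepsilon$ and $\Vert x+w\Vert>2-\varepsilon$. Equivalently, every slice of $B_X$ determined by a functional of $S_Y$ contains points almost at distance $2$ from $-x$.

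Next, fixing a convex combination $\sum_{i=1}^N\alpha_i S(B_{X^*},x_i,\rho_i)$ and replacing every $\rho_i$ by $\rho:=\min_i\rho_i$, exactly as in Theorem \ref{octahedral}, it suffices to estimate the diameter of $\sum_{i=1}^N\alpha_i S(B_{X^*},x_i,\rho)$. The decisive step is a simultaneous thickness statement: for every $\varepsilon>0$ there is $z\in S_X$ with $\Vert x_i+z\Vert>2-\varepsilon$ \emph{and} $\Vert x_i-z\Vert>2-\varepsilon$ for all $i\in\{1,\dots,N\}$. The two-sided requirement is only apparent, since $\Vert x_i-z\Vert=\Vert(-x_i)+z\Vert$, so it is the one-sided statement $\Vert w_j+z\Vert>2-\varepsilon$ applied to the enlarged family $\{x_1,\dots,x_N,-x_1,\dots,-x_N\}\subseteq S_X$. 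Granting such a $z$, Hahn--Banach furnishes $f_i,g_i\in S_{X^*}$ norming $x_i+z$ and $x_i-z$ respectively; for $\varepsilon<\rho$ one checks $f_i(x_i)>1-\varepsilon$ and $g_i(x_i)>1-\varepsilon$, so $f_i,g_i\in S(B_{X^*},x_i,\rho)$, while $f_i(z)>1-\varepsilon$ and $g_i(z)<-(1-\varepsilon)$. Testing $\sum_i\alpha_i f_i-\sum_i\alpha_i g_i\in\sum_i\alpha_i S(B_{X^*},x_i,\rho)$ against $z$ then yields a value exceeding $2-2\varepsilon$, precisely as in Theorem \ref{octahedral}, and letting $\varepsilon\to 0$ gives diameter $2$.

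The main obstacle is the simultaneous thickness lemma, namely producing a \emph{single} $z$ that is good for all the $w_j$ at once: the characterization above only delivers points inside one slice cut out by a functional of $Y$, whereas a natural induction forces one to remain inside the intersection of the finitely many half-spaces encoding the conditions already secured for $w_1,\dots,w_{j-1}$. To circumvent this I would invoke the upgrading of the characterization in which slices are replaced by arbitrary nonempty relatively weakly open subsets of $B_X$ (available for the almost Daugavet property, see \cite{KShW,Sh}): at the $j$-th step the current $z$ lies in the relatively weakly open set determined by functionals nearly norming $w_i+z$ for $i<j$, and one applies the characterization with target $w_j$ \emph{inside} that set, shrinking the tolerances geometrically so the earlier estimates survive. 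Carrying out this bookkeeping on the enlarged family produces the coherent vector $z$ and closes the argument.
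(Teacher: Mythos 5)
Your reduction and the Hahn--Banach computation at the end are fine, and they correctly mirror the i)$\Rightarrow$ii) argument of Theorem \ref{octahedral}; but there is a genuine gap, and it sits exactly where you locate the ``main obstacle''. Your construction of the coherent vector $z$ rests on an alleged upgrade of the almost Daugavet characterization from slices determined by functionals of $Y$ to arbitrary nonempty relatively weakly open subsets of $B_X$. No such upgrade exists in \cite{KShW} or \cite{Sh} --- Shvydkoy's lemma is for the full Daugavet property, and its proof via Bourgain's lemma produces slices cut by \emph{arbitrary} functionals of $S_{X^*}$, which the almost Daugavet property cannot handle --- and in fact the upgraded statement is false. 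Indeed, since every slice $S(B_X,f,\alpha)$ with $f\in S_{X^*}$ is relatively weakly open, your upgraded statement would yield the standard geometric characterization of the full Daugavet property, so it would prove that the almost Daugavet property implies the Daugavet property; this is contradicted by $\ell_1$, which has the almost Daugavet property but, having denting points, admits slices of arbitrarily small diameter. The same obstruction kills the variant where one keeps the constraints as slices by norming functionals taken in $S_Y$: after two steps the constraint set is an intersection of $Y$-slices, and the almost Daugavet property gives points only in a \emph{single} $Y$-slice. Note also that your ``simultaneous thickness lemma'' (a single $z$ with $\Vert w_j+z\Vert>2-\varepsilon$ for all $j$) is precisely a reformulation of octahedrality of the norm of $X$, which by Theorem \ref{octahedral} is \emph{equivalent} to the conclusion of the lemma; so the reduction, without a valid mechanism to produce $z$, trades the statement for one of equal strength.

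The paper's proof avoids this quantifier problem by working entirely on the dual side and aggregating instead of intersecting. It iterates \cite[Lemma 1.3]{KShW}: starting from $g\in S_Y$ it finds $f_1\in S(B_{X^*},x_1,\varepsilon)$ with $\Vert g+f_1\Vert>2-\varepsilon$, uses that $Y$ is norming (so $B_Y$ is $w^*$-dense in $B_{X^*}$, and both defining conditions are $w^*$-open) to take $f_1\in S_Y$, and then replaces the anchor by the single normalized functional $\frac{g+\lambda_1f_1}{\Vert g+\lambda_1f_1\Vert}\in S_Y$ before treating $x_2$. Each step is therefore a one-slice application of the characterization, with no accumulation of constraints, and produces $\sum_i\lambda_i f_i$ with $f_i\in S(B_{X^*},x_i,\varepsilon)$ and $\Vert g+\sum_i\lambda_i f_i\Vert\geq 2-\varepsilon$; running the same scheme with anchor $h=-\frac{\sum_i\lambda_i f_i}{\Vert\sum_i\lambda_i f_i\Vert}$ gives $\sum_i\lambda_i h_i$ in the same convex combination with $\Vert\sum_i\lambda_i f_i-\sum_i\lambda_i h_i\Vert\geq 2-2\varepsilon$. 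In other words, the paper never needs one element of $S_X$ good for all slices at once: the two far-apart points are themselves convex combinations built term by term. If you want to salvage your primal approach, you would first have to prove octahedrality of the norm from the almost Daugavet property by some independent route, which is exactly what the paper obtains as a \emph{consequence} (Corollary \ref{octahedral-DG}) of this lemma together with Theorem \ref{octahedral}.
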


\begin{proof} Let $Y$ a norming subspace of $X^*$ so that $X$ has the Daugavet property
with respect to $Y$. Take $x_1,\ldots ,x_n\in S_X$, $\alpha_1
,\ldots ,\alpha_n\in (0,1)$ and $\lambda_1 ,\ldots,\lambda_n>0$
with $\sum_{i=1}^n\lambda_i=1$. Let us consider the convex
combination of $w^*$-slices in $B_{X^*}$ given by
$$\sum_{i=1}^n\lambda_i S(B_{X^*},x_i,\alpha_i).$$
If $0<\varepsilon<\min_i\{\alpha_i\}$ then $$\sum_{i=1}^n\lambda_i
S(B_{X^*},x_i,\varepsilon)\subset \sum_{i=1}^n\lambda_i
S(B_{X^*},x_i,\alpha_i).$$ Pick $g\in S_Y$. Now, from \cite[Lemma
1.3]{KShW} there is $f_1\in S_{X^*}\cap
S(B_{X^*},x_1,\varepsilon)$ so that $\Vert
g+f_1\Vert>2-\varepsilon$ and then $\Vert g+\lambda_1 f_1\Vert\geq
\lambda_1+1-\varepsilon$. As $Y$ is a norming subspace of $X^*$,
we can assume that $f_1\in S_Y$. Hence $\frac{g+\lambda_1
f_1}{\Vert g+\lambda_1 f_1\Vert}\in S_Y$.

Again, from \cite[Lemma 1.3]{KShW}, there is $$f_2\in S(B_{X^*},
x_2, \frac{\varepsilon}{\Vert g+\lambda_1 f_1\Vert})\subset
S(B_{X^*}, x_2, \varepsilon)$$ such that $\Vert\frac{g+\lambda_1
f_1}{\Vert g+\lambda_1 f_1\Vert}+f_2\Vert>2-
\frac{\varepsilon}{\Vert g+\lambda_1 f_1\Vert}$. Therefore
$$\Vert\frac{g+\lambda_1 f_1}{\Vert
g+\lambda_1 f_1\Vert}+\frac{\lambda_2 }{\Vert g+\lambda_1
f_1\Vert}f_2\Vert\geq \frac{\lambda_2 }{\Vert g+\lambda_1
f_1\Vert}+1-\frac{\varepsilon}{\Vert g+\lambda_1 f_1\Vert},$$ and
so $\Vert g+\lambda_1 f_1+\lambda_2 f_2\Vert\geq
\lambda_2+\lambda_1+1-\varepsilon.$

By iterating the above argument we get $f_1, \ldots ,f_n\in S_Y$
such that\break $f_i\in S(B_{X^*}, x_i, \varepsilon)$ for every
$i$ and
$$\Vert g+\sum_{i=1}^n\lambda_i f_i\Vert\geq \sum_{i=1}^n\lambda_i
+1-\varepsilon=2-\varepsilon.$$

Now, applying the above taking $h=-\frac{\sum_{i=1}^n\lambda_i
f_i}{\Vert \sum_{i=1}^n\lambda_i f_i\Vert}$ we deduce that there
exist $h_1, \ldots ,h_n\in S_Y$ such that $h_i\in S(B_{X^*}, x_i,
\varepsilon)$ for every $i$ and
$$\Vert h+\sum_{i=1}^n\lambda_i h_i\Vert\geq \sum_{i=1}^n\lambda_i
+1-\varepsilon=2-\varepsilon.$$ Then $$diam(\sum_{i=1}^n\lambda_i
S(B_{X^*},x_i,\varepsilon)\geq \Vert\sum_{i=1}^n\lambda_i f_i
-\sum_{i=1}^n\lambda_i h_i\Vert \geq 2-2\varepsilon.$$ Hence
$diam(\sum_{i=1}^n\lambda_i S(B_{X^*},x_i,\alpha_1)\geq
2-2\varepsilon$. As $\varepsilon$ is arbitrarily small, we
conclude the proof.\end{proof}

The version of above lemma for convex combinations of slices was
proved in \cite{ALN} for Banach spaces with the Daugavet property.
The case of Banach spaces with the almost Daugavet property can be
obtained in a similar way.

\begin{lemma} \lbl{almos-DG-bis} Let $X$ be a Banach space with the almost Daugavet
porperty. Then every convex combination of slices $B_X$ has
diameter 2.\end{lemma}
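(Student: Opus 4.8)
The plan is to run the proof of Lemma \ref{lamost-DG} \emph{downstairs}, replacing $X^*$ by $X$ and the dual geometric lemma by its primal counterpart. First I would normalise the data as in Theorem \ref{octahedral}: writing $\rho=\min_i\alpha_i$ we have $\sum_{i=1}^n\lambda_i S(B_X,x_i^*,\rho)\subseteq\sum_{i=1}^n\lambda_i S(B_X,x_i^*,\alpha_i)$, so it suffices to show that $C:=\sum_{i=1}^n\lambda_i S(B_X,x_i^*,\rho)$ has diameter $2$, where the $x_i^*$ lie in $S_{X^*}$ and $\lambda_i>0$, $\sum_i\lambda_i=1$. Concretely, for each $\delta>0$ I would manufacture two points of $C$ whose distance exceeds $2-2\delta$; since $Y$ is norming, that distance may be read off as a supremum over $B_Y$, so it is enough that a single functional of $S_Y$ separate the two points by almost $2$.

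The engine is the primal form of \cite[Lemma 1.3]{KShW}: because $X$ has the Daugavet property with respect to the norming subspace $Y$, for every $u\in S_X$, every slice $S(B_X,x^*,\rho)$ and every $\delta>0$ there is $w$ in that slice with $\Vert u+w\Vert>2-\delta$. Granting this, I would iterate exactly as in Lemma \ref{lamost-DG}. Fix a seed $u_0\in S_X$ and apply the lemma to $S(B_X,x_1^*,\rho)$, obtaining $w_1$ with $\Vert u_0+w_1\Vert>2-\delta$, whence $\Vert u_0+\lambda_1 w_1\Vert\geq\lambda_1+1-\delta$; then normalise $u_1:=\frac{u_0+\lambda_1 w_1}{\Vert u_0+\lambda_1 w_1\Vert}$ and feed it back against $S(B_X,x_2^*,\rho)$ with the accuracy rescaled by $\Vert u_0+\lambda_1 w_1\Vert$, so that the errors telescope. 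After $n$ steps this yields $w_i\in S(B_X,x_i^*,\rho)$ with $\Vert u_0+\sum_{i=1}^n\lambda_i w_i\Vert\geq 2-\delta$, so the point $a:=\sum_{i=1}^n\lambda_i w_i$ lies in $C$ and satisfies $\Vert a\Vert\geq 1-\delta$.

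Next I would rerun the cascade with the new seed $u_0':=-\frac{a}{\Vert a\Vert}\in S_X$, producing $w_i'\in S(B_X,x_i^*,\rho)$ and $b:=\sum_{i=1}^n\lambda_i w_i'\in C$ with $\Vert u_0'+b\Vert\geq 2-\delta$. Then, precisely as at the end of Lemma \ref{lamost-DG},
\[
\Vert a-b\Vert=\bigl\Vert\,\Vert a\Vert\,u_0'+b\,\bigr\Vert\geq\Vert u_0'+b\Vert-\bigl(1-\Vert a\Vert\bigr)\geq(2-\delta)-\delta=2-2\delta ,
\]
whence $\operatorname{diam}C\geq 2-2\delta$. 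Letting $\delta\to 0$ gives $\operatorname{diam}C=2$, and the initial inclusion then yields the same for the original convex combination.

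The step I expect to be the main obstacle is the primal geometric lemma for \emph{arbitrary} slices. The almost Daugavet hypothesis is, by definition, a statement about the rank-one operators $x\otimes y^*$ with $y^*\in Y$, and so controls directly only the slices of $B_X$ cut out by functionals of $Y$; a slice $S(B_X,x^*,\rho)$ with $x^*\in S_{X^*}\setminus Y$ must be reached by invoking that $Y$ is norming, so that norms — and, at the end, the diameter — are computed as suprema over $B_Y$, and by using the full strength of the hypothesis, which holds for every $x\in X$ and every $y^*\in Y$, in particular for $\pm y^*$. This transfer from $Y$-slices to general slices is exactly the point carried out for the Daugavet property in \cite{ALN}; once it is in place, the remainder is the verbatim primal translation of the computation already displayed for Lemma \ref{lamost-DG}.
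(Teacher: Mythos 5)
Your iteration scheme is the right one: it is the verbatim primal translation of the proof of Lemma \ref{lamost-DG}, and indeed the paper offers no separate argument here, saying only that the Daugavet-property case of \cite{ALN} ``can be obtained in a similar way.'' The cascade with seeds $u_0$, $u_1=\frac{u_0+\lambda_1 w_1}{\Vert u_0+\lambda_1 w_1\Vert}$, \dots, the second run from $u_0'=-a/\Vert a\Vert$, and the final estimate $\Vert a-b\Vert\geq 2-2\delta$ are all fine. The genuine gap is exactly the step you flag and then dismiss: the ``primal form of \cite[Lemma 1.3]{KShW}'' is false for an arbitrary slice $S(B_X,x^*,\rho)$ with $x^*\in S_{X^*}\setminus Y$, and no appeal to the norming character of $Y$ can repair it. The almost Daugavet hypothesis controls only slices cut by functionals of $Y$; that $Y$ is norming lets you compute norms (hence diameters) as suprema over $B_Y$, but it gives no way to place a vector with prescribed behaviour inside a slice determined by a functional outside $Y$. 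Your appeal to \cite{ALN} for this transfer is a misreading: there the Daugavet property is with respect to $Y=X^*$, so every slice is a $Y$-slice and no transfer occurs.

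In fact the transfer is not merely difficult but impossible, because the lemma as printed is false for arbitrary slices: $\ell_1$ has the almost Daugavet property (by \cite{KShW}, since its canonical norm is octahedral; equivalently $\lim_n\Vert x+e_n\Vert=\Vert x\Vert+1$ defines an $\ell_1$-type), yet for $e_1^*=(1,0,0,\dots)\in S_{\ell_\infty}$ the slice $S(B_{\ell_1},e_1^*,\alpha)$ has diameter at most $4\alpha$: if $x\in B_{\ell_1}$ and $x_1>1-\alpha$, then $\Vert x-e_1\Vert=\vert x_1-1\vert+\sum_{i\geq 2}\vert x_i\vert<2\alpha$. Since a single slice is itself a convex combination of slices, the stated conclusion fails, so no blind proof could succeed. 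What your argument does establish correctly is the statement the authors actually need for Corollary \ref{octahedral-DG}: every convex combination of slices of $B_X$ determined by functionals of the norming subspace $Y$ has diameter $2$ (equivalently, via the isometric embedding $X\subseteq Y^*$ with $B_X$ $w^*$-dense in $B_{Y^*}$, every convex combination of $w^*$-slices of $B_{Y^*}$ has diameter $2$, whence the norm of $Y$ is octahedral by Theorem \ref{octahedral}). So restrict your $x_i^*$ to $S_Y$ and quote \cite[Lemma 1.3]{KShW} in its primal form for $Y$-slices; with that restriction your proof is complete, while for arbitrary slices the lemma cannot be proved.
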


It is known that for separable Banach spaces the almost Daugavet
pro\-per\-ty and having octahedral norm are equivalent
\cite{KShW}. From Theorem \ref{octahedral} and Lemmas
\ref{lamost-DG} and \ref{almos-DG-bis} we get the following

\begin{corollary} \lbl{octahedral-DG} Let $X$ be a Banach
space.\begin{enumerate}\item[i)] If $X$ has the almost Daugavet
property then the norms of $X$ and $Y$ are octahedral, where $Y$
is the norming subspace of $X^*$ such that $X$ has the Daugavet
property with respect $Y$. \item[ii)] If $X$ has the Daugavet
property then the norms of $X$ and $X^*$ are
octahedral.\end{enumerate}\end{corollary}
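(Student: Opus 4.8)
The plan is to funnel the (almost) Daugavet hypotheses into the equivalences already established, applying Theorem \ref{octahedral}, Corollary \ref{octahedral-dual} and Lemmas \ref{lamost-DG} and \ref{almos-DG-bis} to the appropriate space each time.

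I would first settle ii), which is immediate. If $X$ has the Daugavet property then it has the almost Daugavet property with $Y=X^*$, so Lemma \ref{lamost-DG} gives that every convex combination of $w^*$-slices of $B_{X^*}$ has diameter $2$, and Theorem \ref{octahedral} then yields that the norm of $X$ is octahedral. Likewise Lemma \ref{almos-DG-bis} gives that every convex combination of slices of $B_X$ has diameter $2$, whence Corollary \ref{octahedral-dual} shows that the norm of $X^*$ is octahedral.

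For i), the octahedrality of $X$ follows in exactly the same way from Lemma \ref{lamost-DG} and Theorem \ref{octahedral}. The substantive point is the octahedrality of $Y$, and my strategy is to prove that $Y$ is itself a space with the almost Daugavet property and then apply Lemma \ref{lamost-DG} and Theorem \ref{octahedral} to $Y$ in place of $X$. To this end I would embed $X$ into $Y^*$ by $x\mapsto(y^*\mapsto y^*(x))$; because $Y$ is norming this embedding is isometric, and because $\Vert y^*\Vert=\sup_{x\in B_X}\vert y^*(x)\vert$ for every $y^*\in Y$ the image of $X$ is a norming subspace of $Y^*$. What remains is to verify that $Y$ has the Daugavet property with respect to this copy of $X$, that is, $\Vert I_Y+y^*\otimes x\Vert=1+\Vert y^*\Vert\,\Vert x\Vert$ for all $y^*\in Y$ and $x\in X\subseteq Y^*$.

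I expect this transfer of the Daugavet property from the pair $(X,Y)$ to the pair $(Y,X)$ to be the main obstacle. Computing the operator norms reduces the hypothesis on $X$ to the geometric assertion that for all $x\in S_X$, $y^*\in S_Y$ and $\varepsilon>0$ the slice $S(B_X,y^*,\varepsilon)$ contains a point $u$ with $\Vert x+u\Vert>2-\varepsilon$, while the desired conclusion for $Y$ is that the slice $S(B_Y,x,\varepsilon)$ contains a functional $w^*$ with $\Vert y^*+w^*\Vert>2-\varepsilon$. I would bridge the two by taking the point $u$ given by the hypothesis and choosing, since $Y$ is norming, a functional $w^*\in S_Y$ with $w^*(x+u)>2-\varepsilon$; then $w^*(x)>1-\varepsilon$ puts $w^*$ in the required slice, while $w^*(u)>1-\varepsilon$ together with $y^*(u)>1-\varepsilon$ gives $\Vert y^*+w^*\Vert\geq(y^*+w^*)(u)>2-2\varepsilon$. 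This is in essence \cite[Lemma 1.3]{KShW}, which may alternatively be cited directly. Having shown that $Y$ has the almost Daugavet property, Lemma \ref{lamost-DG} applied to $Y$ gives that every convex combination of $w^*$-slices of $B_{Y^*}$ has diameter $2$, and Theorem \ref{octahedral} applied to $Y$ completes the proof.
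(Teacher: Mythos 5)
Your proof is correct. Part ii) and the octahedrality of $X$ in part i) coincide with the paper's derivation (Lemma \ref{lamost-DG} plus Theorem \ref{octahedral} for $X$; Lemma \ref{almos-DG-bis} plus Corollary \ref{octahedral-dual} for $X^*$), but you genuinely diverge on the octahedrality of $Y$ in i). The paper's one-line proof is meant to transport the \emph{conclusion} through the canonical embedding: by Lemma \ref{almos-DG-bis} every convex combination of slices of $B_X$ has diameter $2$; since $Y$ is $1$-norming, $X\to Y^*$ is isometric and each slice $S(B_X,y_i,\alpha_i)$ with $y_i\in S_Y$ sits inside the $w^*$-slice $S(B_{Y^*},y_i,\alpha_i)$, so every convex combination of $w^*$-slices of $B_{Y^*}$ has diameter $2$ and Theorem \ref{octahedral} applied to $Y$ finishes --- the $Y$-analogue of the remark preceding Corollary \ref{octahedral-dual}. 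You instead transport the \emph{hypothesis}: you show the pair $(Y,\hat X)$ inherits the Daugavet property from $(X,Y)$ --- the known symmetry of Daugavet pairs, in essence \cite[Lemma 1.3]{KShW} as you note --- and then reapply Lemma \ref{lamost-DG} and Theorem \ref{octahedral} to $Y$. Your bridge is sound: picking $w^*\in S_Y$ with $w^*(x+u)>2-\varepsilon$ forces $w^*(x)>1-\varepsilon$ and $w^*(u)>1-\varepsilon$, and $(y^*+w^*)(u)>2-2\varepsilon$ yields the slice condition after rescaling $\varepsilon$; the step you leave implicit (the equivalence of the operator-norm identity with the slice condition, used on both sides) is exactly \cite[Lemma 1.3]{KShW} and requires $\hat X$ to be norming for $Y$, which you correctly verify. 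The trade-off: your route makes i) independent of Lemma \ref{almos-DG-bis} (which the paper proves only by analogy with \cite{ALN}) and isolates the useful fact that the almost Daugavet property passes from $X$ to its norming subspace $Y$; the paper's route is shorter given the lemmas already in place, and both ultimately rest on the same isometric, norming embedding $X\hookrightarrow Y^*$.
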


\begin{remark} We exhibit now an example of a Banach space $X$ failing
the Daugavet property so that the norms of $X$ and $X^*$ are
octhaedral, which disproves the converse statement of ii) in the
above Corollary. Take $X=L_1[0,1]\oplus_{\infty}\ell_1$. Now,
$L_1[0,1]$ has the Daugavet property and so, every convex
combinations of slices in $B_{L_1[0,1]}$ has diameter 2. Then
every convex combinations of slices in $B_X$ has diameter 2, from
\cite[Proposition 4.6]{ALN}, and so the norm of $X^*$ is
octahedral by Corollary \ref{octahedral-dual}. On the other hand,
$X^*=L_{\infty}[0,1]\oplus_1\ell_{\infty}$ and every convex
combination of slices in $B_{L_{\infty}}$ or $B_{\ell_{\infty}}$
has diameter 2. Therefore every convex combination of slices in
$B_{X^*}$ has diameter 2, from \cite[Theorem 2.7]{ALN}, and $X$
has octahedral norm by Theorem \ref{octahedral}. Finally it is
easy to see that $X$ fails Daugavet property, essentially because
$\ell_1$ fails Daugavet property.\end{remark}

We pass now to study the relation between Daugavet property and
octahedral norms for $JB^*$-triples. We recall that a complex
$JB^*$-triple is a complex Banach space $X$ with  a continuous
triple product $\{...\}:X\times X\times X\rightarrow X$ which is
linear and symmetric in  the outer variables, and conjugate-linear
in the middle variable, and satisfies:
\begin{enumerate}
\item For all $x$ in $X$, the mapping $y\rightarrow \{${\it
xxy}$\}$ from $X$ to $X$  is  a  hermitian operator on $X$ and has
nonnegative spectrum. \item The main identity $$
\{ab\{xyz\}\}=\{\{abx\}yz\}-\{x\{bay\}z\}+\{xy\{abz\}\} $$ holds
for all $a,b,x,y,z$  in $X$. \item $\Vert \{xxx\}\Vert =\Vert
x\Vert ^{3}$ for every $x$ in $X$.
\end{enumerate}
Concerning Condition (1) above, we also recall  that  a  bounded
linear operator $T$ on a complex Banach space $X$ is said to be
hermitian if \linebreak $\Vert \exp (irT)\Vert =1$ for every $r$
in ${\Bbb R}$. Examples of complex $JB^*$-triples are all
$C^*$-algebras under the triple product
$$\{xyz\}:=\frac{1}{2}(xy^*z+zy^*x).$$

Following \cite{IKR}, we define real $JB^*$-triples as norm-closed
real subtriples of complex $JB^*$-triples. Here, by a subtriple we
mean a subspace which is closed under triple products of its
elements. Real $JBW^*$-triples where first introduced as those
real $JB^*$-triples which are dual Banach spaces in such a way
that the triple product becomes separately $w^*$-continuous (see
\cite[Definition 4.1 and Theorem 4.4]{IKR}). Later, it has been
shown in \cite{MP} that the requirement of separate
$w^*$-continuity of the triple product is superabundant. The
bidual of every real (respectively, complex) $JB^*$-triple $X$ is
a $JBW^*$-triple under a suitable triple product which extends the
one of $X$ \cite[Lemma 4.2]{IKR} (respectively,~\cite{D}).

The following corollary characterizes the octahedral norms for
real $JB^*$-triples.

\begin{corollary} \lbl{octahedral-triples} Let $X$ be a real $JB^*$-triple. Then $X$ has
the Daugavet property if, and only if, the norm of $X$ is
octahedral.
\end{corollary}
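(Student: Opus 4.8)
The plan is to prove the two implications of Corollary \ref{octahedral-triples} by combining the general equivalence from Theorem \ref{octahedral} with the structural results on real $JB^*$-triples that make the Daugavet property detectable at the level of slices. For the forward implication, suppose $X$ has the Daugavet property. Then Corollary \ref{octahedral-DG} ii) applies directly and yields that the norm of $X$ (indeed also of $X^*$) is octahedral, so this direction requires essentially no triple-specific argument. Thus the substantive content lies entirely in the converse.

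For the converse, assume the norm of $X$ is octahedral. By Theorem \ref{octahedral} this is equivalent to saying that every convex combination of $w^*$-slices of $B_{X^*}$ has diameter $2$. In particular every single $w^*$-slice of $B_{X^*}$ has diameter $2$, so $X$ has the so-called weak$^*$ strong diameter two property at the level of slices. The strategy is to show that for a real $JB^*$-triple this geometric condition forces the Daugavet property. The key structural fact I would invoke is the characterization of the Daugavet property for $JB^*$-triples in terms of the absence of non-trivial finite-dimensional summands or, equivalently, the absence of minimal tripotents (atoms): a real $JB^*$-triple has the Daugavet property precisely when it contains no points of weak$^*$-denting type, i.e.\ when $B_{X^*}$ has no weak$^*$-denting points. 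The plan is therefore to argue contrapositively: if $X$ fails the Daugavet property, then $X$ has a minimal tripotent, which produces a weak$^*$-denting point $f$ of $B_{X^*}$; but a weak$^*$-denting point lies in arbitrarily small $w^*$-slices, and such a slice then has arbitrarily small diameter, contradicting the diameter-two property just derived.

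The main obstacle I expect is the passage from ``octahedral norm'' to an \emph{intrinsic} triple-theoretic obstruction, that is, showing that failure of the Daugavet property in a real $JB^*$-triple genuinely produces a weak$^*$-denting point of $B_{X^*}$ (equivalently a minimal tripotent whose associated functional denting the dual ball). This requires the triple-structure theory: one needs that a real $JB^*$-triple without the Daugavet property admits a non-zero minimal tripotent $u$, whence the functional $f = u^*$ (the supporting functional in the Peirce-$1$ decomposition) is a weak$^*$-strongly-exposed, hence weak$^*$-denting, point of $B_{X^*}$. Once this denting point is in hand the geometric contradiction with the diameter-two property is immediate, since a denting point is by definition contained in slices of diameter tending to $0$. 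I would assemble these ingredients from the real $JB^*$-triple literature cited in the paper (the structure theory in \cite{IKR} and \cite{MP}), and close the argument by chaining: octahedral norm $\Leftrightarrow$ all convex combinations of $w^*$-slices of $B_{X^*}$ have diameter $2$ (Theorem \ref{octahedral}) $\Rightarrow$ no $w^*$-denting points in $B_{X^*}$ $\Rightarrow$ no minimal tripotents $\Rightarrow$ Daugavet property.
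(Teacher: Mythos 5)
Your overall architecture matches the paper's proof. The forward implication is handled identically: Corollary \ref{octahedral-DG} ii) gives octahedrality of the norm of $X$ directly, with no triple-specific input. For the converse, the paper also starts from Theorem \ref{octahedral}, noting in particular that every single $w^*$-slice of $B_{X^*}$ has diameter $2$; the difference is only in how the final triple-theoretic step is packaged. The paper argues that diameter-$2$ $w^*$-slices imply, via \cite[Proposition I.1.11]{DGZ} (the \v{S}mulyan-type duality between Fr\'echet differentiability of the norm at $x\in S_X$ and smallness of the slices $S(B_{X^*},x,\alpha)$), that the norm of $X$ has no points of Fr\'echet differentiability, and then invokes \cite[Theorem 3.10]{BM}, which asserts that a real $JB^*$-triple whose norm is nowhere Fr\'echet differentiable has the Daugavet property. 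Your version --- no $w^*$-denting points of $B_{X^*}$, hence no $w^*$-strongly exposed points, hence the Daugavet property --- is the dual reformulation of exactly the same step, so the mathematical skeleton of your plan is sound and essentially coincides with the paper's.

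The one genuine defect is sourcing, and it sits precisely at the step you yourself flag as the main obstacle. The load-bearing fact (failure of the Daugavet property in a real $JB^*$-triple yields a $w^*$-strongly exposed, hence $w^*$-denting, functional, equivalently a Fr\'echet-smooth point of the norm) is exactly \cite[Theorem 3.10]{BM}, which you never mention; instead you propose to assemble it from \cite{IKR} and \cite{MP}. Those references contain only structure theory of real $JB^*$-triples (real forms of complex triples; automatic separate $w^*$-continuity of the triple product on dual triples) and say nothing about the Daugavet property or about which functionals dent $B_{X^*}$, so the plan as written cannot be completed from the sources you name. Moreover, your free-standing assertion that a real $JB^*$-triple has the Daugavet property precisely when it has no minimal tripotents is itself a nontrivial claim needing proof (atoms and exposing functionals naturally live at the level of $X^{**}$ and $X^*$, and relating them to tripotents of $X$ requires care); the tripotent detour is in any case unnecessary. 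The clean repair, which recovers the paper's proof, is to drop it and chain: all convex combinations of $w^*$-slices of $B_{X^*}$ have diameter $2$ (Theorem \ref{octahedral}), so in particular all $w^*$-slices do; hence the norm of $X$ has no Fr\'echet differentiability points by \cite[Proposition I.1.11]{DGZ}; hence $X$ has the Daugavet property by \cite[Theorem 3.10]{BM}.
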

\begin{proof} If $X$ has the Daugavet property, then from
Corollary \ref{octahedral-DG} we get that the norm $X$ is
octahedral. Assume now that the norm of $X$ is octahedral. From
Theorem \ref{octahedral}, every $w^*$-slice of $B_{X^*}$ has
diameter 2 and so, by \cite[Proposition I.1.11]{DGZ}, $X$ has no
Fr{\'e}chet differentiability points. From \cite[Theorem 3.10]{BM} $X$
has the Daugavet property.
\end{proof}

For dual of $JB^*$-triples having octahedral norm is automatic.

\begin{corollary} \lbl{octahedral-triples-dual} Let $X$ be a nonreflexive real $JB^*$-triple. Then the norm of
$X^*$ is octahedral.
\end{corollary}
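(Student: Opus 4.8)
The plan is to reduce the assertion to a diameter-two statement inside the bidual and then read it off from the triple structure. By Theorem \ref{octahedral} applied to the space $X^*$, the norm of $X^*$ is octahedral if, and only if, every convex combination of $w^*$-slices of $B_{(X^*)^*}=B_{X^{**}}$ has diameter $2$ (alternatively, by Corollary \ref{octahedral-dual}, if and only if every convex combination of slices of $B_X$ has diameter $2$). Since $X$ is a nonreflexive real $JB^*$-triple, its bidual $X^{**}$ is a nonreflexive real $JBW^*$-triple by \cite[Lemma 4.2]{IKR}. Thus it suffices to prove that in a nonreflexive real $JBW^*$-triple $W=X^{**}$ every convex combination $\sum_{i=1}^n\lambda_i S(B_W,\phi_i,\alpha_i)$ of $w^*$-slices (with $\phi_i\in S_{X^*}$) has diameter $2$.

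For this main step I would first record that, for a $JB^*$-triple, reflexivity is equivalent to the Radon--Nikodym property and to the existence of a denting point of the unit ball; hence, $X$ being nonreflexive, $W$ contains an infinite family of mutually orthogonal nonzero tripotents, and in particular one can produce tripotents lying almost in the kernel of any prescribed finite set $\phi_1,\dots,\phi_n$ of predual functionals. Given the convex combination above, choose points $F_i\in S(B_W,\phi_i,\alpha_i)$ and a tripotent $u\in W$ that is (approximately) orthogonal to $F_1,\dots,F_n$ and satisfies $\phi_i(u)\approx 0$ for all $i$. Using the triple norm identity $\Vert a+b\Vert=\max\{\Vert a\Vert,\Vert b\Vert\}$ for orthogonal elements, the perturbed points $F_i\pm u$ still lie in $B_W$ and, since $\phi_i(F_i\pm u)=\phi_i(F_i)\pm\phi_i(u)$ is essentially unchanged, they remain in the slice $S(B_W,\phi_i,\alpha_i)$. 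Then the two convex combinations $\sum_i\lambda_i(F_i+u)$ and $\sum_i\lambda_i(F_i-u)$ differ by $2u$, forcing the diameter to be $2$. This mirrors the construction of the proof of Theorem \ref{octahedral}, i)$\Rightarrow$ii), with the octahedrality witness $x_n$ replaced by a suitable tripotent.

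The hard part is that nonreflexive real $JB^*$-triples need \emph{not} have the Daugavet property: the basic example $X=c_0$ has minimal tripotents and fails the Daugavet property, so neither Corollary \ref{octahedral-triples} nor the almost-Daugavet route of Lemma \ref{lamost-DG} is available ($c_0$ is separable without octahedral norm, hence, by the separable equivalence recalled before Corollary \ref{octahedral-DG}, it even lacks the almost Daugavet property). Consequently the separation must be produced directly from the triple product rather than from a Daugavet-type theorem. The genuine obstacle is to treat \emph{convex combinations} of slices rather than single slices: for one $w^*$-slice the existence of a large orthogonal tripotent already yields diameter $2$, but for a convex combination one must select the perturbing tripotent $u$ simultaneously almost orthogonal to all the $F_i$ and almost annihilated by all the $\phi_i$, which is exactly the point at which the abundance of mutually orthogonal tripotents in a nonreflexive $JBW^*$-triple (equivalently, the failure of the Radon--Nikodym property) has to be exploited.
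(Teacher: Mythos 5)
Your opening reduction is fine and matches the paper: by Theorem \ref{octahedral} applied to $X^*$ (equivalently, by Corollary \ref{octahedral-dual}), it suffices to show that every convex combination of slices of $B_X$, i.e.\ of $w^*$-slices of $B_{X^{**}}$, has diameter $2$. The gap is in your main step. You propose to take arbitrary points $F_i$ in the slices and a nonzero tripotent $u$ that is (approximately) orthogonal to all of them, so that $F_i\pm u\in B_W$. Such a $u$ need not exist: take $X=c_0$ (a nonreflexive real $JB^*$-triple), $W=X^{**}=\ell_\infty$, and $F_i=\mathbf{1}$, which lies in the $w^*$-slice determined by the first coordinate functional $\phi=e_1^*$ since $\phi(\mathbf{1})=1$. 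No nonzero tripotent of $\ell_\infty$ is orthogonal to $\mathbf{1}$ (orthogonality to an invertible element forces $u=0$), and since every nonzero tripotent $u$ of $\ell_\infty$ has some coordinate of modulus $1$, one of $\Vert \mathbf{1}+u\Vert$, $\Vert \mathbf{1}-u\Vert$ equals $2$, so the perturbed points leave the unit ball. Approximate orthogonality does not rescue this: the identity $\Vert a+b\Vert=\max\{\Vert a\Vert,\Vert b\Vert\}$ is not stable under perturbation, and you give no quantitative control. The standard repair is to perturb not $F_i$ but its Peirce-$0$ compression, i.e.\ to use $P_0(u_k)F_i\pm u_k$ along a sequence $(u_k)$ of mutually orthogonal tripotents; but then one must prove that $\phi_i\bigl((P_1(u_k)+P_2(u_k))F_i\bigr)\to 0$ for the finitely many normal functionals $\phi_i$, so that the compressed points stay deep in the slices. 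This asymptotic-annihilation (L-orthogonality) lemma in the predual is exactly the nontrivial missing ingredient: your sketch uses it implicitly but neither states nor proves it, and in the real-triple setting it requires care (Cauchy--Schwarz-type inequalities for the Peirce-$1$ part). Your background claim that nonreflexivity yields infinitely many mutually orthogonal tripotents also rests on the equivalence of reflexivity with finite rank, which you do not cite.

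For contrast, the paper avoids the triple combinatorics entirely: it recalls that $X^*$ is a nonreflexive L-embedded Banach space \cite[Proposition 2.2]{BLPR}, observes that the argument of \cite[Proposition 2.1]{BLPR} --- which gives that every nonempty relatively weakly open subset of $B_X$ has diameter $2$ --- extends to convex combinations of slices of $B_X$, and then concludes by Theorem \ref{octahedral}. So the structural input is a purely Banach-space property of the dual (L-embeddedness), not orthogonality of tripotents in the bidual. You correctly noticed that the Daugavet and almost-Daugavet routes are unavailable (as $c_0$ shows), but as written your construction fails on that very example; to salvage it you would need to formulate and prove the Peirce-compression lemma above, at which point you would have a genuinely different, algebraic proof, at considerably greater cost than the paper's two-line reduction to \cite{BLPR}.
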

\begin{proof} Let us recall that $X^*$ is a nonreflexive L-embedded Banach
space \cite[Proposition 2.2]{BLPR}. From \cite[Proposition
2.1]{BLPR}, we get that every nonempty relative weakly open set of
$B_X$ has diameter 2. Now, using the same arguments in
\cite[Proposition 2.1]{BLPR} one can prove that every convex
combination of slices in $B_{X}$ has diameter 2 and so, the norm
of $X^*$ is octahedral by Theorem \ref{octahedral}.
\end{proof}

From Corollary \ref{octahedral-DG} every Banach space with the
Daugavet property has an octahedral norm, so every convex
combination of $w^*$-slices in $B_{X^*}$ has diameter 2. On the
other hand,  if $X$ is a real $JB^*$-triple, every extreme point
of $B_{X^*}$ is actually a strongly exposed point. Indeed, given
$f\in ext(B_{X^*})$ , by \cite[Corollary 2.1]{PS} and \cite[Lemma
3.1]{BM}, assures the existence of $u\in S_{X^{**}}$ such that
$u(f)= 1$, and $u$ is a point of Fr{\'e}chet-smoothness of the norm of
$X^{**}$.  This implies that $f$ is strongly exposed by $u$ (see
\cite[Corollary I.1.5]{DGZ}). Now, the next corollary follows.

\begin{corollary} Let $X$ be a real $JB^*$-triple with the Daugavet
property. Then every convex combination of $w^*$-slices in
$B_{X^*}$ has diameter 2, but there are convex combinations of
slices in $B_{X^*}$ with diameter arbitrarily
small.
\end{corollary}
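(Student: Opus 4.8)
The statement splits cleanly into two halves, and I would handle them independently. The first half---that every convex combination of $w^*$-slices in $B_{X^*}$ has diameter $2$---requires no new argument: since $X$ has the Daugavet property, Corollary~\ref{octahedral-DG}(ii) shows its norm is octahedral, whereupon Theorem~\ref{octahedral} yields at once that every convex combination of $w^*$-slices in $B_{X^*}$ has diameter $2$. (One could equally cite Lemma~\ref{lamost-DG} directly.) So this half is pure bookkeeping.

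For the second half the plan is to exploit the fact, recorded in the paragraph immediately preceding the corollary, that in a real $JB^*$-triple every extreme point of $B_{X^*}$ is strongly exposed. First I would note that $B_{X^*}$ is $w^*$-compact and convex, so by the Krein--Milman theorem it has at least one extreme point $f_0$ (certainly $X\neq\{0\}$, as it carries the Daugavet property). By the recalled fact, $f_0$ is strongly exposed, say by some $u\in S_{X^{**}}$. Unwinding the definition of strongly exposed point, this says exactly that the norm slices $S(B_{X^*},u,\alpha)$ collapse to $\{f_0\}$ in norm as $\alpha\to 0^+$; concretely, for every $\varepsilon>0$ there is an $\alpha>0$ with the diameter of $S(B_{X^*},u,\alpha)$ less than $\varepsilon$.

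It then suffices to observe that a single slice is itself a convex combination of slices (one term, coefficient $1$), so these norm slices of vanishing diameter already furnish the required convex combinations of slices of arbitrarily small diameter. I do not expect a genuine obstacle: the substance is entirely carried by the strongly-exposed-point fact recalled above, and the rest is a one-line deduction. The only point worth emphasizing is the asymmetry driving the whole statement---the $w^*$-slices in the first half are cut by elements of $X$, whereas the slices in the second half are cut by the exposing functional $u\in X^{**}$, and it is precisely this enlargement of the admissible functionals that permits diameters to collapse while the $w^*$ diameters stay pinned at $2$.
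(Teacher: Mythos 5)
Your proposal is correct and follows exactly the paper's route: the first half via Corollary \ref{octahedral-DG} and Theorem \ref{octahedral} (the paper states this in the paragraph preceding the corollary), and the second half via the fact, also recalled there, that every extreme point of $B_{X^*}$ is strongly exposed by some $u\in S_{X^{**}}$, so that single slices of $B_{X^*}$ of arbitrarily small diameter exist. Your added details (Krein--Milman for the existence of an extreme point, and the observation that a slice is a one-term convex combination) are exactly the steps the paper leaves implicit, and your closing remark on the $X$ versus $X^{**}$ asymmetry correctly identifies the mechanism behind the statement.
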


For a Banach space $X$, we define $w^*-CCS(X^*)$, respectively
$CCS(X^*)$, as the infimum of diameters of all convex combination
of $w^*$-slices, respectively slices, in $B_{X^*}$. With this
notation, the above corollary gives examples where
$w^*-CCS(X^*)=2$ and $CCS(X^*)=0$, which is the extreme case. Then
it is natural wonder when $w^*-CCS(X^*)=2$ implies $CCS(X^*)=2$.
Under some condition of $X^*$ the above holds.

\begin{proposition} Let $X$ be a Banach space and assume that
$B_{X^*}$ is the closed convex hull of the extreme point of
$B_{X^*}$. Then  $$w^*-CCS(X^*)=CCS(X^*) .$$
\end{proposition}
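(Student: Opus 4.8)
The plan is to establish the two inequalities separately. The inclusion $CCS(X^*)\le w^*\text{-}CCS(X^*)$ is immediate and uses nothing about the extreme points: under the canonical embedding $X\hookrightarrow X^{**}$ each $x\in S_X$ lies in $S_{X^{**}}$, so every $w^*$-slice $S(B_{X^*},x,\alpha)$ is in particular a slice of $B_{X^*}$. Hence every convex combination of $w^*$-slices is a convex combination of slices, and passing to the infimum over the larger family can only lower it.

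For the reverse inequality $w^*\text{-}CCS(X^*)\le CCS(X^*)$ I would fix an arbitrary convex combination of slices $C=\sum_{i=1}^N\lambda_i S(B_{X^*},F_i,\alpha_i)$ with $F_i\in S_{X^{**}}$ and, given $\varepsilon>0$, try to manufacture a convex combination of $w^*$-slices $C'$ with $\operatorname{diam}(C')\le\operatorname{diam}(C)+\varepsilon$; as $C$ is arbitrary this would give $w^*\text{-}CCS(X^*)\le CCS(X^*)$. The hypothesis enters first through the remark that $1=\Vert F_i\Vert=\sup_{B_{X^*}}F_i=\sup_{\operatorname{ext}B_{X^*}}F_i$, the last equality being exactly the assumption $B_{X^*}=\overline{\operatorname{co}}^{\,\Vert\cdot\Vert}(\operatorname{ext}B_{X^*})$ together with linearity of $F_i$; consequently each slice $S(B_{X^*},F_i,\alpha_i)$ contains an extreme point $e_i$ of $B_{X^*}$. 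Since $B_{X^*}$ is $w^*$-compact and each $e_i$ is extreme, Choquet's lemma supplies $w^*$-slices through $e_i$ that are arbitrarily small for the $w^*$-topology, and these are the natural building blocks for $C'$; as in the proof of Theorem \ref{octahedral} one may also normalise all the $\alpha_i$ to a common value to simplify the bookkeeping.

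The hard part, and the place where I expect the real obstacle, is to bound the \emph{norm} diameter of $C'$. What Choquet's lemma delivers is smallness for the $w^*$-topology only, while $\operatorname{diam}$ is a norm quantity; around an extreme point a $w^*$-slice may well have norm-diameter $2$ (every $w^*$-slice of $B_{\ell_\infty}$ does), so one must resist the temptation to seek a containment $S(B_{X^*},x,\beta)\subseteq S(B_{X^*},F_i,\alpha_i)$ — this is in general false, as the example of a Banach limit on $\ell_\infty$ shows. Instead I would work with the whole combination at once, using the identity $\operatorname{diam}\big(\sum_i\lambda_i A_i\big)=\sup_{g\in B_{X^{**}}}\sum_i\lambda_i\big(\sup_{A_i}g-\inf_{A_i}g\big)$ and comparing the oscillation sum for $C'$ with that for $C$. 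The entire force of the assumption $B_{X^*}=\overline{\operatorname{co}}^{\,\Vert\cdot\Vert}(\operatorname{ext}B_{X^*})$, as opposed to the Krein--Milman identity with the $w^*$-closure which holds unconditionally, is that it makes norm- and $w^*$-approximation by extreme points compatible, and it is exactly this compatibility that should allow the $w^*$-control coming from Choquet's lemma to be upgraded into the required estimate on the norm oscillation, the remaining slack being absorbed into $\varepsilon$.
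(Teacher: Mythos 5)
Your first inequality is exactly the paper's (there it is dismissed as ``clear from the definitions''), and your observation that the hypothesis forces $\sup_{\operatorname{ext}(B_{X^*})}F_i=1$, hence $S(B_{X^*},F_i,\alpha_i)\cap \operatorname{ext}(B_{X^*})\neq\emptyset$, coincides with the paper's first use of the assumption; this is the right way to use it, since the slicing functional $F_i\in X^{**}$ is only norm-continuous, so one needs the norm-closed convex hull of the extreme points rather than the $w^*$-closed hull that Krein--Milman gives for free. But from that point on your proposal never becomes a proof. Your final paragraph is a programme, not an argument: you name the quantity to estimate (the oscillation sum $\sup_{g\in B_{X^{**}}}\sum_i\lambda_i\bigl(\sup_{A_i}g-\inf_{A_i}g\bigr)$, which is a correct formula for the diameter of $\sum_i\lambda_i A_i$), and you assert that the hypothesis ``should allow'' the $w^*$-control from Choquet's lemma to be upgraded to a norm estimate, but no candidate convex combination $C'$ of $w^*$-slices is ever exhibited and no estimate $\operatorname{diam}(C')\le\operatorname{diam}(C)+\varepsilon$ is ever proved. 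Since the entire content of the proposition lives in exactly that step, this is a genuine gap, not a fixable slip.

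The twist is that the route you declared off-limits is precisely the paper's proof: having found an extreme point in the slice $S$, the paper invokes Choquet's lemma (\cite[Lemma 3.40]{FHHMPZ}) to produce a $w^*$-slice $S^*\subset S$, whence every convex combination of slices contains a convex combination of $w^*$-slices and the two infima coincide. Your Banach-limit objection is aimed squarely at this step, and it is well taken: Choquet's lemma says the $w^*$-slices containing an extreme point $e$ form a neighborhood base of $e$ for the $w^*$-topology of $B_{X^*}$, so to extract $S^*\subset S$ one needs $S$ to be a relative $w^*$-neighborhood of $e$, which a slice given by $F\in X^{**}\setminus X$ need not be. Concretely, for $X=\ell_1$ (where $B_{\ell_\infty}$ is indeed the norm-closed convex hull of its extreme points) and $L$ a Banach limit, the slice $S=\{f\in B_{\ell_\infty}:L(f)>1-\alpha\}$ contains the extreme point $(1,1,1,\dots)$ yet contains no $w^*$-slice: given $x\in S_{\ell_1}$ and $\beta>0$, pick $N$ with $\sum_{n>N}\vert x_n\vert<\beta/4$ and set $f_n=\operatorname{sgn}(x_n)$ for $n\le N$ and $f_n=-1$ for $n>N$; then $f\in S(B_{\ell_\infty},x,\beta)$ while $L(f)=-1$. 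Note this does not contradict the proposition itself (in that example both quantities equal $2$), but it does show the paper's containment step fails as stated. So the comparison is unusual: your instinct exposes a real defect in the paper's own argument, but because your alternative oscillation argument is not carried out, your proposal does not establish the proposition either --- the hard inequality is left unproved.
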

\begin{proof} The inequality $CCS(X^*)\leq w^*-CCS(X^*)$ is clear,
from the definitions. Now, if $S$ is a slice of $B_{X^*}$ then we
get from our assumption that $S\cap ext(B_{X^*})\neq \emptyset$,
by convexity. Hence there is $S^*$ a $w^*$-slice of $B_{X^*}$ so
that $S^*\subset S$, by Choquet's Lemma (see \cite[Lemma
3.40]{FHHMPZ}). Therefore every convex combination of slices in
$B_{X^*}$ contains a convex combination of $w^*$-slices in
$B_{X^*}$ and we are done.
\end{proof}

Observe that the above proposition holds in  particular for Banach
spaces not containing isomorphic copies of $\ell_1$ \cite{FHHMPZ}.
On the other hand , it is known (see \cite[Theorem II.4, Remark
II.5]{G} and \cite[Proposition 4 and Corollary 6]{D}) that a
Banach space containing $\ell_1$ if and only if has an equivalent
octahedral norm if and only if there is a equivalent norm such
that $w^*-CCS(X^*)=2$. The natural question then is to know if a
Banach space containing $\ell_1$ can be equivalently renormed so
that $CCS(X^*)=2$. In \cite[Remark II.5, 3)]{G}, it is asked if
every Banach space containing $\ell_1$ has an equivalent norm so
that the corresponding bidual norm is octahedral. From Theorem
\ref{octahedral} we deduce that this last question is equivalent
to asking if every Banach space containing $\ell_1$ can be
equivalently renormed so that $CCS(X^*)=2$. Our next result can be
seen like a partial answer to the above question.

\begin{proposition} \lbl{convex-w} Let $X$ be a separable Banach space containing
a subspace isomorphic to $\ell_1$. Then for every $\varepsilon>0$
there is an equivalent norm in $X$ such that every convex
combination of slices of the new unit ball of $X^{*}$ has
diameter, at least, $2-\varepsilon$.
\end{proposition}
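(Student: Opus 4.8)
The plan is to reduce the statement to producing an equivalent norm whose \emph{bidual} norm is ``$\varepsilon$-octahedral'' (i.e.\ the octahedral inequality holds with the fixed constant $1-\varepsilon$ for every finite-dimensional subspace of $X^{**}$), and then to run an argument parallel to the implication i)$\Rightarrow$ii) of Theorem \ref{octahedral}, but with the separating direction taken in $X^{**}$ instead of in $X$. Concretely, suppose we have renormed $X$ so that for every finite-dimensional $V\subseteq X^{**}$ there is $F\in S_{X^{**}}$ with $\Vert G+\lambda F\Vert\geq (1-\varepsilon)(\Vert G\Vert+\vert\lambda\vert)$ for all $G\in V$ and $\lambda\in\mathbb R$. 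Given a convex combination of slices $\sum_{i=1}^N\lambda_i S(B_{X^*},F_i,\alpha)$ with $F_i\in S_{X^{**}}$ (first reducing all the slice parameters to a common $\alpha$ exactly as in the proof of Theorem \ref{octahedral}), apply this to $V=\mathrm{span}\{F_1,\dots,F_N\}$. Since the bidual norm of any $G\in X^{**}$ is computed as $\sup_{f\in B_{X^*}}G(f)$, I would use this formula to pick $f_i,g_i\in B_{X^*}$ nearly norming $F_i+F$ and $F_i-F$; because $\Vert F_i\pm F\Vert\geq 2(1-\varepsilon)$, these necessarily satisfy $F_i(f_i)$ and $F_i(g_i)$ close to $1$ (so $f_i,g_i\in S(B_{X^*},F_i,\alpha)$ for $\alpha$ of order $\varepsilon$), while $F(f_i)$ is close to $1$ and $F(g_i)$ close to $-1$. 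Then $\sum_i\lambda_i f_i$ and $\sum_i\lambda_i g_i$ lie in the convex combination and evaluating their difference at $F$ gives at least $2(1-\varepsilon)$ up to the fixed tolerances, so the diameter is at least $2-\varepsilon$ after rescaling the parameter.

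The substantive point is therefore the construction of the renorming that makes the bidual $\varepsilon$-octahedral, and this is where I would use that $X$ is separable and contains $\ell_1$. By James' non-distortion theorem the copy of $\ell_1$ contains, for a small parameter $\eta=\eta(\varepsilon)$, a normalized sequence $(e_n)$ with $(1-\eta)\sum_i\vert a_i\vert\leq\Vert\sum_i a_i e_i\Vert$. The idea is to build an equivalent norm for which the $w^*$-cluster points $F\in X^{**}$ of tails of $(e_n)$ are, up to $\varepsilon$, additive with the whole space: for such a cluster point and $x\in X$ one computes $\Vert x\pm F\Vert_{X^{**}}\approx\Vert x\Vert+1$ by testing against a functional that simultaneously norms $x$ and takes value near $\pm 1$ along the relevant ultrafilter on the tail. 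Since $X$ is $w^*$-dense in $X^{**}$ and the bidual norm is $w^*$-lower semicontinuous, additivity with $X$ should propagate to additivity with all of $X^{**}$, which is the required octahedral inequality; choosing the cluster point from a far enough tail lets a single $F$ work against a prescribed finite-dimensional $V$.

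The main obstacle, and the reason the conclusion is $2-\varepsilon$ rather than $2$, is that one must make a fixed direction $F$ additive with \emph{genuine} bidual elements $G\in X^{**}\setminus X$, not only with $X$: two $w^*$-cluster points of the same $\ell_1$-sequence need not be norm-additive with one another, so the naive supply of directions coming from $(e_n)$ does not cover every finite-dimensional bidual subspace exactly. Overcoming this is precisely where the renorming has to be arranged with care and where separability is exploited, in order to manufacture a sufficiently rich family of asymptotically additive directions; it is also where the gap with the open problem of \cite{G} (whether the bidual norm can be made genuinely octahedral, equivalently $CCS(X^*)=2$) shows up, which is why only a partial answer is expected. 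Once the renorming is in place, I anticipate the remaining work — the bookkeeping relating $\eta$, the slice parameters, and the target $\varepsilon$, and the final inclusion passing from the common-parameter convex combination to an arbitrary one — to be routine.
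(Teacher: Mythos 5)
Your reduction in the first paragraph is sound: if one can renorm $X$ so that the bidual norm satisfies the octahedral inequality with constant $1-\varepsilon$, then running the proof of Theorem \ref{octahedral}, i)$\Rightarrow$ii), one dual level up (slices of $B_{X^*}$ determined by $F_i\in S_{X^{**}}$, separating direction $F\in X^{**}$) does yield diameter at least $2-O(\varepsilon)$ for every convex combination of slices of $B_{X^*}$; indeed by Theorem \ref{octahedral} this target is essentially \emph{equivalent} to the conclusion of the Proposition. But that means the entire content of the Proposition has been deferred to the renorming you never construct, and the one concrete mechanism you propose for it is invalid. The step ``since $X$ is $w^*$-dense in $X^{**}$ and the bidual norm is $w^*$-lower semicontinuous, additivity with $X$ propagates to additivity with all of $X^{**}$'' has the inequality backwards: if $x_\alpha\to G$ in the $w^*$-topology, lower semicontinuity gives $\Vert G+\lambda F\Vert\leq\liminf_\alpha\Vert x_\alpha+\lambda F\Vert$, an upper bound relative to the approximants, whereas the octahedral inequality for $G$ requires a \emph{lower} bound on $\Vert G+\lambda F\Vert$. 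There is no soft fix: the paper's corollaries on real $JB^*$-triples with the Daugavet property exhibit spaces whose norm is octahedral (so directions asymptotically additive with $X$ abound, and $w^*$-$CCS(X^*)=2$) while $CCS(X^*)=0$, so additivity with $X$ genuinely does not transfer to $X^{**}$; and if your propagation principle held, combining it with Godefroy's octahedral renorming would resolve affirmatively the open problem from \cite{G} (bidual octahedral renorming for every separable $X\supseteq\ell_1$), which the paper explicitly presents as open and as the reason only the $\varepsilon$-version is proved. Your third paragraph acknowledges this obstacle but offers no construction to overcome it, so the proof is incomplete at precisely its substantive point.

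For contrast, the paper does not go through bidual $\varepsilon$-octahedrality at all. It imports the needed richness from outside: by \cite[Theorem 2]{DGH} (this is where separability and the $\ell_1$-copy are used), $C(\Delta)$ is a quotient of $X$, so $X^*$ contains a $w^*$-closed isometric copy $Z$ of $C(\Delta)^*$ with matching $w^*$-topologies; by \cite[Theorem 4.6]{ScSeWe} there is a $w^*$-compact convex set $C\subset S_Z$ in which \emph{every} convex combination of slices has diameter $2$; Lemma \ref{simetrico} symmetrizes this to $K=co(C\cup-C)$, and Lemma \ref{renorming} takes as new dual unit ball $\overline{K+\eta B_{X^*}}$, whose convex combinations of slices inherit diameter at least $\frac{2-2\eta}{1+\eta}>2-\varepsilon$; $w^*$-closedness of this ball makes the new norm a dual norm, i.e.\ induced by an equivalent norm on $X$. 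A single $\ell_1$-sequence and its $w^*$-cluster points, as in your sketch, cannot substitute for the Schachermayer--Sersouri--Werner set: as you yourself note, such cluster points need not be additive with one another, let alone with arbitrary finite-dimensional subspaces of $X^{**}$.
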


In order to prove the above proposition we need a couple of
lemmas.

\begin{lemma}\label{simetrico} Let $X$ be a Banach space and $C$ is a convex,
$w^*$-compact subset of $B_{X^*}$ such that every convex
combination of slices in $C$ has diameter 2. Then  the set
$K=co(C\cup -C)$ is a $w^*$-compact convex subset of $B_{X^*}$
such that every convex combination of slices in $K$ has diameter
2.
\end{lemma}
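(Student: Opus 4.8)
The plan is to split the statement into a topological part and a diameter part. For the topological part I would first observe that $C\subseteq B_{X^*}$ forces $-C\subseteq B_{X^*}$ (as $B_{X^*}$ is symmetric), so by convexity of $B_{X^*}$ we get $K=co(C\cup -C)\subseteq B_{X^*}$; convexity of $K$ is automatic. For $w^*$-compactness I would use that $C$ and $-C$ are convex and $w^*$-compact, so that every point of $K$ has the form $tc+(1-t)c'$ with $c\in C$, $c'\in -C$, $t\in[0,1]$. Thus $K$ is the image of the $w^*$-compact set $[0,1]\times C\times(-C)$ under $(t,c,c')\mapsto tc+(1-t)c'$, a map that is $w^*$-continuous on bounded sets, and the continuous image of a compact set is compact.

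The substantive part is the diameter statement, and the idea is to transport the diameter-$2$ property from $C$ to $K$ direction by direction. Fix a convex combination $\sum_{i=1}^n\lambda_i S(K,x_i,\alpha_i)$. For each $i$ I would compute the support of $K$ in the direction $x_i$: since $\sup_{f\in K}f(x_i)=\max\{\sup_{f\in C}f(x_i),\ \sup_{f\in C}f(-x_i)\}$, I can choose a sign $\epsilon_i\in\{-1,+1\}$ for which this maximum is attained on the $\epsilon_i C$ side and set $y_i=\epsilon_i x_i$, so that $\sup_{f\in C}f(y_i)=\sup_{g\in K}g(x_i)$. A direct check then gives $\epsilon_i S(C,y_i,\alpha_i)\subseteq S(K,x_i,\alpha_i)$: if $f\in S(C,y_i,\alpha_i)$ then $\epsilon_i f\in\epsilon_i C\subseteq K$ and $(\epsilon_i f)(x_i)=f(y_i)>\sup_{g\in K}g(x_i)-\alpha_i$.

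Now I would invoke the hypothesis on $\sum_{i=1}^n\lambda_i S(C,y_i,\alpha_i)$, which has diameter $2$. Given $\eta>0$ this yields $p_i,q_i\in S(C,y_i,\alpha_i)$ and a direction $z\in S_X$ with $\sum_{i=1}^n\lambda_i(p_i-q_i)(z)>2-\eta$. Each of $\epsilon_i p_i$ and $\epsilon_i q_i$ lies in $S(K,x_i,\alpha_i)$, so any selection of one per index, averaged with the $\lambda_i$, lands in $\sum_{i=1}^n\lambda_i S(K,x_i,\alpha_i)$.

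The main obstacle is the signs $\epsilon_i$: the naive choices $u=\sum\lambda_i\epsilon_i p_i$ and $v=\sum\lambda_i\epsilon_i q_i$ give $(u-v)(z)=\sum\lambda_i\epsilon_i(p_i-q_i)(z)$, where the $\epsilon_i$ may cause cancellation, so this need not be near $2$. I would resolve this by swapping the roles of $p_i$ and $q_i$ on the indices with $\epsilon_i=-1$: set $u=\sum_{\epsilon_i=1}\lambda_i\epsilon_i p_i+\sum_{\epsilon_i=-1}\lambda_i\epsilon_i q_i$ and $v=\sum_{\epsilon_i=1}\lambda_i\epsilon_i q_i+\sum_{\epsilon_i=-1}\lambda_i\epsilon_i p_i$. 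Both lie in $\sum_{i=1}^n\lambda_i S(K,x_i,\alpha_i)$, and a termwise computation shows the sign flips cancel exactly, so that $u-v=\sum_{i=1}^n\lambda_i(p_i-q_i)$. Hence $\Vert u-v\Vert\ge(u-v)(z)>2-\eta$, and letting $\eta\to0$ with the slices fixed yields diameter $2$, as required.
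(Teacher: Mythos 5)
Your proof is correct, and while it rests on the same underlying idea as the paper's --- reflect whatever lives on the $-C$ side back to $C$ and exploit the central symmetry of $K$ so that the signs cannot cause cancellation --- the implementation is genuinely different. The paper partitions the indices according to whether the given slice $S_i$ of $K$ meets $C$ or $-C$, forms $\Lambda=\sum_{i\in A}\lambda_i(S_i\cap C)+\sum_{i\in B}\lambda_i(S_i\cap(-C))\subset\sum_{i=1}^n\lambda_i S_i$, and uses the set identity $\Lambda-\Lambda=D-D$ with $D$ a convex combination of slices of $C$; this leaves implicit the fact (true, but requiring a width adjustment of the form $\beta_i=\alpha_i-(\sup_K x_i-\sup_C x_i)$, with nonemptiness guaranteed by the choice of $A$ and $B$) that the nonempty trace on $C$ of a slice of $K$ is again a slice of $C$. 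You instead choose the sign $\epsilon_i$ according to where the support of $K$ in the direction $x_i$ is attained, which yields the direct inclusion $\epsilon_i S(C,\epsilon_i x_i,\alpha_i)\subseteq S(K,x_i,\alpha_i)$ with the \emph{same} width $\alpha_i$ and with nonemptiness automatic from the definition of the supremum; your swap of $p_i$ and $q_i$ on the indices with $\epsilon_i=-1$ is precisely the pointwise form of the paper's identity $\Lambda-\Lambda=D-D$ (for any set $E$ one has $E-E=(-E)-(-E)$). What your route buys is that it never needs to argue that traces of slices of $K$ are slices of $C$, so the step the paper glosses over disappears; what the paper's route buys is that it treats arbitrary slices of $K$ without reference to where the supremum is attained, at the cost of the $A/B$ case analysis. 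The topological part --- realizing $K$ as the image of $[0,1]\times C\times(-C)$ under a map that is $w^*$-continuous on bounded sets --- is the same parametrization the paper records as $K=\{\lambda a-(1-\lambda)b:\lambda\in[0,1],\ a,b\in C\}$.
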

\begin{proof} As $C$ is a $w^*$-compact and convex subset, then
$K$ is also $w^*$-compact and convex. This is a consequence from
the fact that $$K=\{\lambda a-(1-\lambda)b:\lambda\in[0,1],\
a,b\in C\}.$$ Pick $S_1,\ldots ,S_n$ slices of $K$ and
$\lambda_1,\ldots ,\lambda_n>0$ with $\sum_{i=1}^n \lambda_i=1$.
Let $A=\{i\in\{1,\ldots ,n\}: S_i\cap C\neq \emptyset\}$ and
$B:=\{1,\ldots ,n\}\setminus A$. Let's observe that every slice of
$K$ has to be nonempty intersection with $C$ or $-C$.

Now we have that
$$\Lambda:=\sum_{i\in A}\lambda_i(S_i\cap C)+\sum_{i\in B}\lambda_i(S_i\cap
(-C))\subset \sum_{i=1}^n\lambda_iS_i,$$ and then
$$\Lambda-\Lambda=\sum_{i\in A}\lambda_i(S_i\cap C)+\sum_{i\in B}\lambda_i(S_i\cap
(-C))-\sum_{i\in A}\lambda_i(S_i\cap C)-\sum_{i\in
B}\lambda_i(S_i\cap (-C))=$$
$$\sum_{i\in A}\lambda_i(S_i\cap C)+\sum_{i\in B}\lambda_i(-S_i\cap
C)-(\sum_{i\in A}\lambda_i(S_i\cap C)+\sum_{i\in
B}\lambda_i(-S_i\cap C))=D-D,$$ where $D=\sum_{i\in
A}\lambda_i(S_i\cap C)+\sum_{i\in B}\lambda_i(-S_i\cap C)$ is a
convex combination of slices in $C$. From the hypothesis, we have
that $diam(D)$=2, hence we get that $diam(\Lambda)=2$ and so
$diam(\sum_{i=1}^n\lambda_iS_i)=2$.\end{proof}

\begin{lemma}\label{renorming}
Let $(X,\Vert\cdot\Vert)$ be a Banach space and $C\subset B_X$ an
absolutely convex and closed subset satisfying that every convex
combination of slices has $\Vert\cdot\Vert$-diameter 2. Then for
every $\varepsilon>0$ there is   an equivalent norm
$\vert\cdot\vert$ in $X$ such that every convex combination of
slices in $B_{(X, \vert\cdot\vert)}$ has
$\vert\cdot\vert$-diameter at least $2-\varepsilon$.
\end{lemma}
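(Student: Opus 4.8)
The plan is to construct the new norm $|\cdot|$ as the Minkowski gauge of a set built from $C$, so that $C$ becomes, up to a controlled factor, the unit ball of the new norm. Given the absolutely convex closed set $C \subset B_X$ with every convex combination of slices of $C$ having diameter $2$, I would first fix $\varepsilon > 0$ and choose a small parameter $\eta > 0$ (depending on $\varepsilon$), then define
$$K_\eta = \overline{\mathrm{co}}\bigl(C \cup \eta B_X\bigr),$$
which is an absolutely convex, closed, bounded set with nonempty interior (since it contains $\eta B_X$). Its Minkowski gauge $|\cdot|$ is then an equivalent norm on $X$, with $B_{(X,|\cdot|)} = K_\eta$. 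The point of adding the small ball $\eta B_X$ is purely to guarantee that the gauge is a genuine equivalent norm (the original $C$ need not have interior), while keeping $K_\eta$ close to $C$.

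Next I would transfer the diameter-$2$ property of convex combinations of slices from $C$ to $K_\eta = B_{(X,|\cdot|)}$. The key observation is that a slice $S(K_\eta, f, \alpha)$ of the enlarged ball, for a functional $f$ nearly norming on $C$, captures essentially the same extremal behaviour as the corresponding slice of $C$: the extra piece $\eta B_X$ contributes points of small norm that get pushed to the boundary of $K_\eta$ only in directions where $C$ is already deficient. Concretely, for each slice $S_i$ of $K_\eta$ I would argue that $S_i \cap C$ is a (possibly rescaled) slice of $C$, or contains one, provided $\eta$ is small relative to $\alpha$; then a convex combination $\sum \lambda_i S_i$ of slices of $K_\eta$ contains $\sum \lambda_i (S_i \cap C)$, which is a convex combination of slices of $C$ and hence has $\|\cdot\|$-diameter $2$ by hypothesis. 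Since $|\cdot| \geq \|\cdot\|$ up to the normalization factor $\max(1,\eta^{-1}\cdot\text{(something)})$, and more importantly $C \subset K_\eta$ forces $|x| \leq \|x\|$ on $C$, I would convert the $\|\cdot\|$-diameter $2$ lower bound into a $|\cdot|$-diameter bound of at least $2 - \varepsilon$ by controlling the distortion between the two norms on the relevant points, which are all essentially in $C$.

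The main obstacle I anticipate is the careful bookkeeping that relates slices of the \emph{enlarged} ball $K_\eta$ to slices of $C$ while simultaneously tracking how the two norms compare on the extremal points that realize the diameter. Two things must be reconciled: the intersection $S_i \cap C$ must be shown to be nonempty and to behave like a slice of $C$ (so that the hypothesis on $C$ applies), and the points achieving near-maximal $\|\cdot\|$-distance must be shown to also have $|\cdot|$-norm close to $1$, so that they legitimately lie near the boundary of $B_{(X,|\cdot|)}$ and the diameter bound survives the change of norm. If any realizing point had $|\cdot|$-norm much smaller than its $\|\cdot\|$-norm, the diameter estimate would degrade; so the quantitative choice of $\eta$ in terms of $\varepsilon$ is where the argument is delicate. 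The absolute convexity and closedness of $C$ are exactly what let me treat $K_\eta$ symmetrically and apply the convex-combination hypothesis without sign issues, paralleling the role of Lemma \ref{simetrico}.
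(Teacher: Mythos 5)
Your construction contains a genuine gap, and it sits exactly where you anticipated trouble: you take $K_\eta=\overline{\mathrm{co}}\,(C\cup\eta B_X)$, the closed convex hull of the \emph{union}, whereas the argument needs the Minkowski \emph{sum} $\overline{C+\eta B_X}$, which is what the paper uses. The difference is fatal for your transfer step. For the hull of a union one only has $\sup_{K_\eta}f=\max\{\sup_C f,\ \eta\Vert f\Vert\}$, so whenever $\sup_C f<\eta\Vert f\Vert$, a sufficiently thin slice $S(K_\eta,f,\alpha)$ is \emph{disjoint} from $C$: every $x\in C$ satisfies $f(x)\leq\sup_C f<\eta\Vert f\Vert-\alpha$ once $\alpha<\eta\Vert f\Vert-\sup_C f$. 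This case is not exotic: $C$ need not be absorbing and may lie in a proper closed subspace --- in the paper's application (Proposition \ref{convex-w}) $C$ sits inside a subspace $Z$ of $X^*$ isometric to $C(\Delta)^*$ --- and then any functional vanishing on that subspace produces such slices. For these slices $S_i\cap C=\emptyset$, so the hypothesis on $C$ cannot be invoked; worse, the conclusion itself can fail, since near the point of $\eta B_X$ where such an $f$ attains its maximum the boundary of $K_\eta$ is just a piece of $\eta S_X$, about which nothing is assumed --- it may be strictly convex there, so already a single slice of the new ball can have arbitrarily small diameter. A two-dimensional picture makes this plain: take $C$ a segment and $\eta B_X$ a Euclidean disc; the hull is a pointed oval whose slice at the top of the disc shrinks to a point, while the sum is a stadium whose corresponding slice contains a full translate of the segment.

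The Minkowski sum repairs this because suprema are \emph{additive} over sums: with $B_{\vert\cdot\vert}=\overline{C+\eta B_X}$ and $f$ of new dual norm one, the paper exploits $\sup_C f+\eta\sup_{B_X}f=1$ to get $S(C,f,\rho/2)+\eta\, S(B_X,f,\rho/(2\eta))\subset S(B_{\vert\cdot\vert},f,\rho)$ for \emph{every} $f$, with no condition relating $f$ to $C$. A convex combination of slices of the new ball then contains $\Delta+\eta\Gamma$, where $\Delta$ is a convex combination of slices of $C$ (so $\Vert\cdot\Vert$-diameter $2$ by hypothesis) and $\Gamma\subset B_X$, giving $\Vert\cdot\Vert$-diameter at least $2-2\eta$; the change of norm is then the crude inclusion $B_{\vert\cdot\vert}\subset(1+\eta)B_X$, i.e.\ $\vert u\vert\geq\Vert u\Vert/(1+\eta)$ applied to difference vectors --- so your worry about realizing points having small new norm never arises, and choosing $\eta$ with $\frac{2-2\eta}{1+\eta}>2-\varepsilon$ finishes. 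In short: your outline correctly identifies the quantitative pressure point, but the key containment ``$S_i\cap C$ contains a slice of $C$'' is false for the hull-of-union ball, and no smallness of $\eta$ rescues it; replacing $\overline{\mathrm{co}}\,(C\cup\eta B_X)$ by $\overline{C+\eta B_X}$ is the essential missing idea.
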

\begin{proof}
Pick an arbitrary $\varepsilon>0$ and we put $\eta\in \real^+$
such that $\frac{2-2\eta}{1+\eta}>2-\varepsilon$. Consider
$\vert\cdot\vert$ the equivalent norm in $X$ whose unit ball is
$$B_{\vert\cdot\vert}:=\overline{C+\eta B_X}\  .$$ Now choose $n\in \natu$,
$\beta_1,\ldots,\beta_n\in (0,1)$, $\lambda_1,\ldots ,\lambda_n>0$
with $\sum_{i=1}^n \lambda_i=1$ and $f_1,\ldots ,f_n\in
S_{(X,\vert\cdot\vert)^*}$. Let us see that the convex combination
of slices $\sum_{i=1}^n \lambda_i
S(B_{\vert\cdot\vert},f_i,\beta_i)$ has diameter $2-\varepsilon$.
We put, for $i\in \{1,\ldots ,n\}$, $\gamma_i:=\sup_{C}f_i$ and
$\delta_i:= \sup_{B_X}f_i$, then we have that $\gamma_i +\eta
\delta_i=1$. We consider $\rho \in \real$ such that $0<\rho <\min
\{\beta _i,\gamma_i,\delta_i,\beta _i \eta ,\gamma_i \eta
,\delta_i \eta :i=1,\ldots ,n\}$. As a consequence, we have that
for every $1\leq i\leq n$ one has
$$ S(C,f_i, \frac{\rho}{2} ) +\eta S(B_X,f_i,
\frac{\rho}{2\eta }) \subset
 S( B_{\vert\cdot\vert},f_i,\rho).$$So $\sum_{i=1}^n \lambda_i   S(C,f_i,\frac{ \rho}{2}
)+\lambda_i\eta S  (  B_X,f_i, \frac{ \rho}{2 \eta} ) $ is
contained in $\sum_{i=1}^n \lambda_i
S(B_{\vert\cdot\vert},f_i,\beta_i)$. Now, as
$$\Delta:=\sum_{i=1}^n \lambda_i  S  (C,f_i,\frac{\rho}{2})$$
is a convex combination of slices of $C$, we get that
$\Vert\cdot\Vert-diam(\Delta)=2$. Moreover $$\Gamma:=\sum_{i=1}^n
\lambda_i  S(B_X,f_i, \frac{\rho}{2 \eta})$$ is a subset of $B_X$,
and hence $\Vert\cdot\Vert-$diameter is at most $2$. Hence
$$\Vert\cdot\Vert-diam(\Delta +\eta \Gamma)\geq 2-2\eta$$
and so $$\Vert\cdot\Vert-diam(\sum_{i=1}^n\lambda_i
S(B_{\vert\cdot\vert},f_i,\beta_i))\geq 2-2\eta.$$ Finally, from
$B_{\vert\cdot\vert}\subset (1+\eta)B_X$ we deduce that
$$\vert\cdot\vert-diam(\sum_{i=1}^n\alpha_i
S(B_{\vert\cdot\vert},x_i^*,\beta_i))\geq
\frac{2-2\eta}{1+\eta}>2-\varepsilon .$$\end{proof}

\begin{proof} of Proposition \ref{convex-w}. Assume that $X$ contains a subspace isometric to
$\ell_1$ and fix $\varepsilon>0$. From \cite[Theorem 2]{DGH} we
know $C(\Delta)$ is isometric to a quotient space of $X$, where
$\Delta=\{0,1\}^{\natu}$ is the Cantor set. Now $X^*$ contains a
subspace $Z$ isometric to $C(\Delta)^*$. Furthermore, $Z$ is
$w^*$-closed in $X^*$ and the weak-star topology of $X^*$ on $Z$
is the weak-star topology of $C(\Delta)^*$ on $Z$. Now, from
\cite[Theorem 4.6]{ScSeWe}, there is a $w^*$-compact and convex
subset $C$ of $S_{Z}$ so that every convex combination of slices
in $C$ has diameter 2. From lemma \ref{simetrico} we get that
$K:=co(C\cup (-C))$ is a $w^*$-compact and absolutely  convex
subset of $B_{X^*}$ such that every convex combination of slices
in $K$ has diameter 2. Finally, from lemma \ref{renorming} we get
an equivalent norm in $X^*$ and the new unit ball $B$ in $X^*$
satisfies  that every convex combination of slices in $B$ has
diameter $2-\varepsilon$. As we have, for some $\eta>0$, that
$B=co(K+\eta B_{X^*})$ is $w^*$-closed the new norm in $X^*$ is a
dual norm and the proof is complete. \end{proof}

We don't know if the above proposition is valid for nonseparable
Banach spaces containing $\ell_1$-copies.

We recall that a Banach space is said to be strongly regular if
every closed, bounded and convex subset of $X$ contains convex
combinations of slices with diameter arbitrarily small. Similarly,
$X^*$ is said to be $w^*$-strongly regular if  $w^*$-compact and
convex subset of $X^*$ contains convex combinations of
$w^*$-slices with diameter arbitrarily small. We refer to
\cite{GGMS} for background about these topics. It is known that
$X^*$ is strongly regular if, and only if, $X^*$ is $w^*$-strongly
regular which is equivalent to $X$ does not containing isomorphic
copies of $\ell_1$ \cite[Corollary VI.18]{GGMS} and, from
\cite{G}, equivalent to $X$ having an equivalent octahedral norm.
With these known facts joint to Theorem \ref{octahedral} and
Proposition \ref{convex-w} we get the following final

\begin{corollary} Let $X$ be a Banach space. Consider the
following assertions: \begin{enumerate}\item[i)] $X$ contains
isomorphic subspaces to $\ell_1$.\item[ii)] $X^*$ fails to be
strongly regular. \item[iii)] $X^*$ fails to be $w^*$-strongly
regular. \item[iv)] $X$ has an equivalent octahedral norm.
\item[v)] $X$ has an equivalent norm so that every convex
combination of $w^*$-slices in the new unit ball of $X^*$ has
diameter 2. \item[vi)] For every $\varepsilon>0$ there is an
equivalent norm in $X$ so that every convex combination of slices
in the new unit ball of $X^*$ has diameter
$2-\varepsilon$.\end{enumerate} Then the statements i), ii), iii),
 iv) and v) are equivalent and, if $X$ is separable, the six statements
are equivalent.\end{corollary}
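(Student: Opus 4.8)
The plan is to prove the five-way equivalence of i)--v) for an arbitrary Banach space directly from the facts recalled just before the statement together with Theorem \ref{octahedral}, and then, under separability, to insert vi) into the cycle using Proposition \ref{convex-w}. No step requires a genuinely new argument; the work is in citing the correct result for each arrow and in checking that equivalent renorming interacts correctly with each notion.

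For the core equivalence I would proceed as follows. By \cite[Corollary VI.18]{GGMS}, $X^*$ is strongly regular if and only if it is $w^*$-strongly regular, and both hold if and only if $X$ does not contain an isomorphic copy of $\ell_1$; passing to negations yields at once that i), ii) and iii) are mutually equivalent. Godefroy's theorem \cite{G} asserts that $X$ contains $\ell_1$ if and only if $X$ admits an equivalent octahedral norm, which is exactly i) $\Leftrightarrow$ iv). Finally iv) $\Leftrightarrow$ v) is Theorem \ref{octahedral} applied to the renormed space: an equivalent norm on $X$ is octahedral precisely when every convex combination of $w^*$-slices of the associated dual unit ball has diameter $2$, so the existence of an octahedral renorming and the existence of a renorming realizing v) are literally the same statement. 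This settles i)--v) with no separability hypothesis.

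For the separable case I would close the remaining loop with the two implications i) $\Rightarrow$ vi) and vi) $\Rightarrow$ ii). The first is precisely Proposition \ref{convex-w}. For the second I would use that strong regularity is an isomorphic invariant: a slice of a fixed convex set is the same set in any equivalent norm, and diameters change only by bounded multiplicative factors, so the existence of convex combinations of slices of arbitrarily small diameter is unaffected by renorming. Consequently, if $X^*$ were strongly regular, then for every equivalent dual norm the closed bounded convex set $B_{X^*}$ would contain convex combinations of slices of arbitrarily small diameter, i.e. $CCS(X^*)=0$ for every renorming. Choosing any $\varepsilon<2$ in vi) produces an equivalent norm with $CCS(X^*)\geq 2-\varepsilon>0$, a contradiction; hence $X^*$ fails to be strongly regular, which is ii), and since ii) $\Leftrightarrow$ i) the cycle is complete.

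I do not anticipate a serious obstacle, since every implication reduces to a cited result, to Theorem \ref{octahedral}, or to Proposition \ref{convex-w}. The only point needing care is the direction vi) $\Rightarrow$ ii): one must state explicitly that strong regularity of $X^*$ is preserved under passing to an equivalent dual norm, so that a single witnessing renorming already contradicts it, and one must apply the definition to the full unit ball $B_{X^*}$ rather than to an auxiliary convex subset.
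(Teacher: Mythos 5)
Your proposal is correct and follows essentially the same route as the paper, which simply assembles the cited facts: \cite[Corollary VI.18]{GGMS} for i)$\Leftrightarrow$ii)$\Leftrightarrow$iii), Godefroy's renorming theorem from \cite{G} for i)$\Leftrightarrow$iv), Theorem \ref{octahedral} for iv)$\Leftrightarrow$v), and Proposition \ref{convex-w} for i)$\Rightarrow$vi) in the separable case. Your explicit verification of vi)$\Rightarrow$ii) via the isomorphic invariance of strong regularity is exactly the step the paper leaves implicit, and you carry it out correctly.
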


Now the aforementioned question about if every Banach space
containing $\ell_1$-copies can be equivalent renormed so that the
corresponding bidual norm is octahedral posed in \cite[Remark
II.5, 3)]{G} is equivalent to wonder if one can get the
equivalence in the above corollary with $\varepsilon=0$, which
seems highly non trivial.

Finally, we remark that the above question has an affirmative
answer if $X$ is a Banach space containing a complemented
isomorphic copy of $\ell_1$. Indeed, we can assume that $X$
contains $Y$ an isometric and complemented copy of $\ell_1$. Then
there is a linear and continuous projection $P:X\rightarrow Y$.
Let's define $\vert x\vert=\Vert P(x)\Vert +\Vert x-P(x)\Vert$ for
every $x\in X$. Now $\vert\cdot\vert$ is an equivalent norm in $X$
such that $(X,\vert\cdot\vert)^*=Y^*\oplus_{\infty}(Ker\ P)^*$. As
$Y^*$ is isometric to $\ell_{\infty}$, we get that every convex
combination of slices in $B_{Y^*}$ has diameter 2. From
\cite[Proposition 4.6]{ALN}, we deduce that every convex
combination of slices in $B_{(X,\vert\cdot\vert )^*}$ has diameter
2. Finally, from Corollary \ref{octahedral-dual} we are done.

\bigskip

{\bf Acknowledgements.} We would like to thank  professor Gilles
Godefroy for notice us about the relation between octahedral norms
and convex combination of slices and for kindly answering our
inquiries.

\bigskip

\end{document}